    \definecolor{gray}{rgb}{0.33,0.4,0.47}
    \definecolor{steelblue}{rgb}{0,.42,.7}
    \definecolor{britishgreen}{rgb}{0,0.26,0.15}
    \definecolor{navyblue}{rgb}{0,0,.8}
    \definecolor{olivegreen}{rgb}{0.14,0.29,0}
    \definecolor{myred}{rgb}{0.86,0.1,0.16}
\newif\ifitsdraft
\def\itsdraft{\global\itsdrafttrue}
\newtheorem{exe}{Example}
\newtheorem{corol}{Corollary}
\newtheorem{ass}{Assumption}
\newtheorem{defin}{Definition}
\newtheorem{cla}{Claim}
\newtheorem{rem}{Remark}
\newtheorem{lem}{Lemma}
\newtheorem{prop}{Proposition}
\newtheorem{thm}{Theorem}
\newtheorem{fct}{Fact}
\newtheorem{prob}{Problem}
\newenvironment{lemma}{\begin{lem}}{\hfill $\square$ \end{lem}}
\newenvironment{proposition}{\begin{prop}}{\hfill $\square$ \end{prop}}
\newenvironment{corollary}{\begin{corol}}{\hfill $\square$ \end{corol}}
\newenvironment{example}{\begin{exe}\rm }{\hfill $\square$ \end{exe}}
\newenvironment{remark}{\begin{rem}\rm }{\hfill $\bullet$ \end{rem}}
\newenvironment{assumption}{\begin{ass}}{\hfill $\bullet$ \end{ass}}
\newenvironment{theorem}{\begin{thm}}{\hfill $\square$ \end{thm}}
\newenvironment{definition}{\begin{defin}}{\hfill $\bullet$ \end{defin}}
\newenvironment{claim}{\begin{cla}}{\hfill $\bullet$ \end{cla}}
\newenvironment{problem}{\begin{prob}}{\hfill $\bullet$ \end{prob}}
 \newenvironment{proof}{\noindent {\it Proof.}}{\hfill \mbox{\footnotesize $\blacksquare$}}
\title{\LARGE \bf Necessary and Sufficient Conditions for the Nonincrease of Scalar Functions Along Solutions to Constrained Differential Inclusions}
\author{Mohamed Maghenem, Alessandro Melis, and Ricardo G. Sanfelice
\thanks{M. Maghenem is with University of Grenoble Alpes, CNRS, Gipsa-lab, Grenoble INP. Email: mohamed.maghenem@gipsa-lab.fr.  A. Melis is with University of Bologna. Email: alessandro.melis4@unibo.it.  R. G. Sanfelice is with the Department of Electrical and Computer Engineering, University of California, Santa Cruz. Email:ricardo@ucsc.edu.}
\thanks{Research partially supported by NSF Grants no. ECS-1710621, CNS-1544396, and CNS-2039054, by AFOSR Grants no. FA9550-19-1-0053, FA9550-19-1-0169, and FA9550-20-1-0238, and by CITRIS and the Banatao Institute at the University of California.}
}
\begin{document}
\maketitle\thispagestyle{empty}\pagestyle{empty}

\begin{abstract}
In this paper, we propose necessary and sufficient conditions for a scalar function to be nonincreasing along solutions to general differential inclusions with state constraints. The problem of determining if a function is nonincreasing  appears in the study of stability and safety, typically using Lyapunov and barrier functions, respectively. The results in this paper present infinitesimal conditions that do not require any knowledge about the solutions to the system. Results under different regularity properties of the considered scalar function are provided. This includes when the scalar function is lower semicontinuous, locally Lipschitz and regular, or continuously differentiable.
\end{abstract}

\textbf{Keywords:} Constrained systems;
Differential inclusions; 
Nonincreasing functions;  
Lyapunov-like functions.

\else

\begin{document}
%\begin{frontmatter}

\title{\textbf{Necessary and Sufficient Conditions for the Nonincrease of Scalar Functions Along Solutions to Constrained Differential Inclusions}} 
\thanks{Research partially supported by NSF Grants no. ECS-1710621, CNS-1544396, and CNS-2039054, by AFOSR Grants no. FA9550-19-1-0053, FA9550-19-1-0169, and FA9550-20-1-0238, and by CITRIS and the Banatao Institute at the University of California.}

\author{Mohamed Maghenem} \address{University Grenoble Alpes, CNRS, Grenoble INP, Gipsa lab,  France. Email: mohamed.maghenem@gipsa-lab.fr.}

\author{Alessandro Melis} \address{Department of Electrical, Electronic, and Information Engineering, University of Bologna, Italy. Email: alessandro.melis4@unibo.it.}

\author{Ricardo G. Sanfelice} \address{Department of Electrical \& Computer Engineering, University of California Santa Cruz. California, USA. Email: ricardo@ucsc.edu.}

 \subjclass{93A10-26B05}

\begin{abstract}
In this paper, we propose necessary and sufficient conditions for a scalar function to be nonincreasing along solutions to general differential inclusions with state constraints. The problem of determining if a function is nonincreasing  appears in the study of stability and safety, typically using Lyapunov and barrier functions, respectively. The results in this paper present infinitesimal conditions that do not require any knowledge about the solutions to the system. Results under different regularity properties of the considered scalar function are provided. This includes when the scalar function is lower semicontinuous, locally Lipschitz and regular, or continuously differentiable.
\end{abstract}

\keywords{
Constrained systems;
Differential inclusions; 
Nonincreasing functions;  
Lyapunov-like functions.}

\maketitle
%\end{frontmatter}

\fi

\section{Introduction}

\subsection{Background} 
\label{sec:Introduction-Background}

The problem considered in this paper is to characterize, via necessary and sufficient conditions, the property of a function to be nonincreasing when evaluated along the solutions to a nonlinear system. In the particular case where the system is given by $\dot{x} = F(x)$ and the function is $B : \mathbb{R}^n \rightarrow \mathbb{R}$, this problem consists in establishing necessary and sufficient conditions such that the scalar function $t \mapsto B(\phi(t))$ is nonincreasing for every solution $t \mapsto \phi(t)$ to $\dot{x} = F(x)$. For such conditions to be useful, they need to be infinitesimal, meaning that they do not depend on the solutions; namely, they only involve $B$ and $F$. The aforementioned problem is known to be one of the fundamental problems in calculus \cite{boasbook96}, and has attracted the attention of mathematicians over the years, dating back to the work of \textit{Pierre de Fermat} on local extrema for differentiable functions in the $17^{th}$ century \cite{aubin2009set}. 

 A key difficulty in solving such a problem emerges from the smoothness of (or lack of) the maps $F$ and $B$. As expected, initial solutions to this problem deal with the particular case where both $F$ and $B$ are sufficiently smooth. In such a basic setting, a necessary and sufficient condition for $B$ to be nonincreasing is that the scalar product between the gradient of $B$ and $F$ is nonpositive at each $x \in \mathbb{R}^n$; namely, $\langle \nabla B(x), F(x) \rangle \leq 0$ for all $x \in \mathbb{R}^n$. When $B$ is not continuously differentiable, the problem requires nonsmooth analysis tools since the gradient of $B$ may not be defined according to the classical sense. 

When $B$ is nonsmooth, the existing solutions to the problem use the notion of \textit{directional subderivatives} \cite{sontag1995nonsmooth, sontag1983lyapunov, clarke1997asymptotic}. It appears that directional subderivatives were first proposed and used by \textit{Ulisse Dini} in 1878 \cite{dini1907lezioni}. Since then, many extensions were proposed in the literature, see \cite{clarke1999invariance, clarke1993subgradient, clarke2008nonsmooth, aubin2012differential}. These extensions allow to cover general scenarios where $F$ is a general set-valued map, and thus the system is a differential inclusion of the form $\dot{x} \in F(x)$, and $B$ is merely continuous, or just semicontinuous. Moreover, in those extensions, the classical gradient $\nabla B$ is replaced by its nonsmooth versions, such as the proximal subderivative \cite{clarke2013functional}, denoted $\partial_P B$, and the Clarke generalized gradient \cite{clarke1990optimization, bacciotti1999stability}, denoted 
$\partial_C B$.

\subsection{Motivation}
\label{sec:Introduction-Motivation}

To the best of our knowledge, the existing solutions to the stated problem consider a system $\dot{x} \in F(x)$ defined on an open subset $C \subset \mathbb{R}^n$ where the solutions cannot start from the boundary of the set $C$, denoted $\partial C$. This requirement is customarily used in the literature of unconstrained systems, see, e.g., \cite{clarke2008nonsmooth, aubin2012differential, refId0}. However, the assumption that the solutions cannot start from $\partial C$ is restrictive when dealing with general constrained systems of the form 
\begin{align} \label{dif.inc}
\mathcal{H}_f : 
\quad  \dot{x} \in F(x)  \qquad x \in C \subset \mathbb{R}^n,   
\end{align}
where $C$ is not necessarily open and the solutions might start from or slide on $\partial C$.

In this context of constrained systems, the existing solutions to the considered problem are not applicable. Indeed, assume that the set $C$ is closed. In this case, it might be possible to find a vector $\eta \in F(x)$ for some $x \in \partial C$ such that the direction $\eta$ does not generate solutions, for example, when $\eta$ points towards the complement of $C$. Such vectors should not be included in an infinitesimal condition for the nonincrease of $t \mapsto B(\phi(t))$, otherwise this condition would not be necessary; see the forthcoming Example \ref{expp} for more details. At the same time, the vector $\eta$, although not generating solutions, may affect the global behavior of the solutions. Hence, such vectors should be somehow included in the characterization of the nonincrease of $t \mapsto B(\phi(t))$, otherwise the condition may fail to be sufficient; see the forthcoming Example \ref{exp1} for more details. As we show in this paper, to handle such a compromise, extra assumptions relating $F$ to the boundary of $C$ must be imposed.

Solving the considered problem in the context of constrained systems finds a natural motivation when characterizing  safety in terms of barrier functions. Indeed, characterizing the nonincreasing behavior of such functions along solutions is critical for the safety property to hold. 

\subsection{Contributions}

In this paper, we propose solutions to the stated problem in the general case of constrained differential inclusions. This problem is studied under different  conditions on the scalar function $B$, including the following three cases:

\begin{itemize}
\item   
When the scalar function $B$ is lower semicontinuous (i.e., for each $x \in \mathbb{R}^n$ and for each sequence $\left\{ x_n \right\}^{\infty}_{n = 0} \subset \mathbb{R}^n$ with $\liminf_{n \rightarrow \infty} x_n = x \in \mathbb{R}^n$, we have 
$\liminf_{n \rightarrow \infty} B(x_n) \geq B(x)$) we transform the problem of showing that $B$ is nonincreasing along the solutions to $\mathcal{H}_f = (C,F)$ into characterizing \textit{forward pre-invariance} of the set $\epi B \cap (\cl(C) \times \mathbb{R})$, where 
$\epi B := \left\{ (x, r) \in \mathbb{R}^n \times \mathbb{R} : r \geq B(x) \right\}$ is the epigraph of $B$ and $\cl(C)$ is the closure of $C$, for the augmented constrained system 
\begin{align} \label{aug.inc} 
\begin{bmatrix} 
\dot{x} \\ \dot{r}  \end{bmatrix} \in \begin{bmatrix} F(x) \\ 0 \end{bmatrix} \qquad (x,r) \in C \times \mathbb{R}.
\end{align}
 Namely, we propose necessary and sufficient conditions guaranteeing that each solution to \eqref{aug.inc} starting from $\epi B \cap (\cl(C) \times \mathbb{R})$ never leaves this set for all time instants at which it is defined. As a consequence, the proposed conditions are inequalities involving $F$, the proximal subdifferential of $B$, denoted $\partial_P B$, and the contingent cone to $C$, denoted $T_C$, which is, roughly speaking, used to rule out directions of $F$ not generating solutions. 

\item When the function $B$ is locally Lipschitz,  similar inequalities to the lower semicontinuous case are proposed. Due to the assumption of a stronger smoothness property for $B$, the Clarke generalized gradient, denoted $\partial_C B$, is used instead of the proximal subdifferential 
$\partial_P B$. 

\item When the function $B$ is continuously differentiable, the conditions proposed are a corollary of those when $B$ is locally Lipschitz. In particular, when $B$ is continuously differentiable the Clarke generalized gradient $\partial_C B$ reduces to the classical gradient 
$\nabla B$. 
\end{itemize}

To the best of our knowledge, there are no results in the literature characterizing the nonincrease of $t \mapsto B(\phi(t))$, for each $\phi$ solution to $\mathcal{H}_f = (C,F)$, using necessary and sufficient infinitesimal conditions. A preliminary version of this paper is in the conference article \cite{Sanfelice:monotonicity}, where the proofs, detailed explanations, and some examples have been omitted.  
\\

\textbf{Notations and preliminaries.} For $x$, $y \in \mathbb{R}^n$, $x^\top$ denotes the transpose of $x$, $|x|$ the Euclidean norm of $x$, $\langle x,y \rangle := x^\top y$ the scalar product between $x$ and $y$, and $\co \left\{x, y\right\}$ the set of all convex combinations between $x$ and $y$. For a scalar function $B: \mathbb{R}^n \rightarrow \mathbb{R}$, $\nabla B(x)$ denotes the gradient of the function $B$ evaluated at $x$. Note that the epigraph of a lower semicontinuous function $B$ is a closed subset of $\mathbb{R}^{n+1}$.
By $\mathbb{B}$ we denote the closed unit ball in $\mathbb{R}^n$ centered at the origin. 
For a subset $K \subset \mathbb{R}^n$, we use $|x|_K:= \inf_{y \in K} |x-y|$ to denote the distance from $x$ to $K$, $\mbox{int}(K)$ to denote the interior of $K$, $\partial K$ its boundary, and $U(K)$ to denote a sufficiently small open neighborhood around $K$.  
For $O \subset \mathbb{R}^n$, we use $K \backslash O$ to denote the subset of elements of $K$ that are not in $O$. 
Furthermore, we use $T_K(x)$, $C_K(x)$, $N_K(x)$, and $N^P_K(x)$ to denote, respectively, the \textit{contingent}, the \textit{Clarke tangent}, the \textit{normal} \footnote{Also named subnormal cone in \cite{Aubin:1991:VT:120830}.}, and the \textit{proximal normal} cones of $K$ at $x$ given by 
$T_K(x) := \left\{ v \in \mathbb{R}^n: \liminf_{h \rightarrow 0^+} |x + h v|_K/h = 0 \right\}$,
$C_K(x) := \left\{ v \in \mathbb{R}^n: \lim_{y \rightarrow x, h \rightarrow 0^+} |y+ h v|_K/h = 0 \right\}$,
$$ N_K(x) := \left\{ v \in \mathbb{R}^n : \langle v, w \rangle \leq 0 \quad \forall w \in T_K(x) \right\}, $$ and $N_S^P(x) := \left\{ \zeta \in \mathbb{R}^m : \exists r > 0~:~|x+r\zeta|_{S} = r 
|\zeta| \right\}$. Finally, for a set-valued map $F: \mathbb{R}^m \rightrightarrows \mathbb{R}^n$,
\begin{itemize}
\item  $F$ is \textit{outer semicontinuous} at $x \in \mathbb{R}^m$ if, for all $\left\{x_i\right\}^{\infty}_{i=0} \subset \mathbb{R}^m$ and for all $\left\{ y_i \right\}^{\infty}_{i=0} \subset \mathbb{R}^n$ with $x_i \rightarrow x$, $y_i \in F(x_i)$, and $y_i \rightarrow y \in \mathbb{R}^n$, we have $y \in F(x)$; see \cite[Definition 5.9]{goebel2012hybrid}. 

\item $F$ is \textit{lower semicontinuous} (or, equivalently, \textit{inner semicontinuous}) 
at $x \in \mathbb{R}^m$ if, for each $\epsilon > 0$ and for each $y_x \in F(x)$, there exists $U(x)$ a neighborhood of $x$ such that, for each $z \in U(x)$, there exists $y_z \in F(z)$ such that $|y_z - y_x| \leq \epsilon$; 
see  
\cite[Proposition 2.1]{michael1956continuous}.  

\item  $F$ is \textit{upper semicontinuous} at $x \in \mathbb{R}^m$ if, for each $\epsilon > 0$, there exists $U(x)$ such that, for each $y \in U(x)$, $F(y) \subset F(x) + \epsilon \mathbb{B}$;
see \cite[Definition 1.4.1]{aubin2009set}.

\item  $F$ is \textit{continuous} at $x \in \mathbb{R}^m$ if it is both upper and lower semicontinuous at $x$.

\item $F$ is outer (lower, and upper, respectively) semicontinuous if it is outer (lower, and upper, respectively) semicontinuous at every $x \in \mathbb{R}^m$. Finally, $F$ is said to be continuous if it is continuous at every $x \in \mathbb{R}^m$. 

\item $F$ is \textit{locally bounded} if, for each
$x \in \mathbb{R}^n$, there exist $U(x)$ and 
$K > 0$ such that $|\zeta| \leq K$  for all $\zeta \in F(y)$, and for all $y \in U(x)$.  

\item $F$ is \textit{locally Lipschitz} if, for each compact set 
$K \subset \mathbb{R}^n$, there exists $k>0$ such that, for each $x \in K$ and $y \in K$, 
$F(y) \subset F(x) + k |x-y| \mathbb{B}$.  
\end{itemize}

\section{Constrained Differential Inclusions}

Consider the constrained differential inclusion $\mathcal{H}_f := (C,F)$ in \eqref{dif.inc} with the state variable $x \in \mathbb{R}^n$, the set $C \subset \mathbb{R}^n$ and the set-valued map 
$F: \mathbb{R}^n \rightrightarrows \mathbb{R}^n$. As opposed to the existing literature dealing with unconstrained differential inclusions, where $C = \mathbb{R}^n$ \cite{aubin2012differential, clarke2008nonsmooth}, the set $C$ in \eqref{dif.inc} is not necessarily open and does not neccessarily correspond to $\mathbb{R}^n$. Next, we introduce the concept of a solution to $\mathcal{H}_f$.

\begin{definition}{(Concept of Solution to $\mathcal{H}_f$)} 
\index{forward solution}
A function $\phi : \dom \phi \to \reals^n$ with $\dom \phi \subset \mathbb{R}_{\geq 0}$ and $t \mapsto \phi(t)$ locally absolutely continuous is a {\em solution} to $\mathcal{H}_f$ if
\begin{enumerate}[label={(S\arabic*)},leftmargin=*]
\item \label{itemS00} $\phi(0) \in \mbox{cl}(C)$,
\item \label{itemS01} $ \phi(t) \in C \qquad  \mbox{for all} \quad  t \in \mbox{int}(\dom \phi) $,
\item \label{itemS02}  $ \frac{d\phi}{dt}(t) \in F(\phi(t)) \qquad  \mbox{for almost all} \quad t\in \dom \phi$.                    
\end{enumerate}     
\end{definition}

\begin{remark}
Condition \ref{itemS00} allows solutions starting from $\partial C \backslash C$ to flow into $C$ such that \ref{itemS01} is satisfied. Furthermore, \ref{itemS01} allows solutions starting from $C$ to reach $\partial C \backslash C$. Hence, symmetry between forward and backward solutions is preserved. 
\end{remark}

 A solution $\phi$ to $\mathcal{H}_f$ is said to be maximal if there is no solution $\psi$ to $\mathcal{H}_f$ such that $\phi(t) = \psi(t)$ for all $t \in \dom \phi$ with $\dom \phi$ a proper subset of $\dom \psi$. Furthermore, it is said to be forward complete if $\dom \phi$ is unbounded. Finally, we recall the definitions of forward pre-invariance and pre-contractivity of a set $K \subset \mathbb{R}^n$ for the system $\mathcal{H}_f$.
\begin{definition}[Forward pre-Invariance]
A set $K \subset \mathbb{R}^n$ is said to be forward pre-invariant for a constrained system $\mathcal{H}_f= (C,F)$ if each solution  to $\mathcal{H}_f$ starting from $K$ remains in it.
\end{definition}
\begin{definition}[Pre-contractivity]
A closed set $K \subset \mathbb{R}^n$ is said to be pre-contractive for a constrained system $\mathcal{H}_f= (C,F)$ if, for every nontrivial solution, i.e., solution whose domain contains more than one element, $\phi$ starting from $x_o \in \partial K$, there exists $\epsilon > 0$ such that $\phi(t) \subset \mbox{int}(K)$ for all $t \in (0,\epsilon]$.
\end{definition}
The ``pre'' in forward pre-invariance and forward pre-contractivity is used to accommodate  maximal solutions that are not complete.

Throughout this paper the set-valued map $F$
satisfies the following mild assumption.
\begin{assumption} \label{item:difinc} 
$F: \mathbb{R}^n \rightrightarrows \mathbb{R}^n $ is upper semicontinuous and $F(x)$ is compact and convex for all $x \in \mathbb{R}^n$.
\end{assumption}

Before concluding this section, the following remarks are in order. 

\begin{remark}
Assumption \ref{item:difinc} is customarily used in the literature as the tightest requirement for the existence of solutions and adequate structural properties for the set of solutions, see \cite{aubin2012differential, Aubin:1991:VT:120830, clarke2008nonsmooth}. When $F$ is single valued, Assumption \ref{item:difinc} reduces to the continuity of $F$.  In some of the existing literature, e.g. \cite{goebel2012hybrid}, Assumption \ref{item:difinc} is replaced by the equivalent assumption stating that $F$ needs to be outer semicontinuous and locally bounded with convex images. Indeed, outer semicontinuous and locally bounded set-valued maps are upper semicontinuous with compact images \cite[Theorem 5.19]{rockafellar2009variational}, the converse is also true using \cite[Lemma 5.15]{goebel2012hybrid} and the fact that upper semicontinuous set-valued maps with compact images are locally bounded. 
\end{remark} 

\begin{remark} \label{remHyb}
Constrained differential inclusions 
$\mathcal{H}_f=(C,F)$ constitute a key component in the modeling of hybrid systems. Indeed, according to \cite{goebel2012hybrid}, a general hybrid system modeled as a hybrid inclusion is given by
\begin{align} \label{eq.hsys}
\mathcal{H}: & 
\left\{ 
\begin{matrix}  
\dot{x} \in F(x)  & x \in C
\\ 
x^+ \in G(x) & x \in D, 
\end{matrix} \right.
\end{align}
where, in addition to the continuous dynamics or flows $\mathcal{H}_f = (C,F)$, the \textit{discrete dynamics} are defined by the jump set $D \subset \mathbb{R}^n$ and the jump map 
$G: \mathbb{R}^n \rightrightarrows \mathbb{R}^n$. Furthermore, solutions to $\mathcal{H}_f = (C,F)$ correspond to solutions to 
$\mathcal{H}$, according to \cite[Definition 2.6]{goebel2012hybrid}, that never jump.
\end{remark}

\section{Problem Statement, 
Motivational Application,
 and Existing Solutions} \label{Sec.1}

In this section, we formulate the problem treated in this paper. After that, 
we illustrate a motivation from stability and safety analysis using Lyapunov and barrier functions, respectively.

 Given a constrained differential inclusion $\mathcal{H}_f = (C,F)$ as in \eqref{dif.inc} and a scalar function $B: \mathbb{R}^n \rightarrow \mathbb{R}$, we would to address the following problem.

\begin{problem} \label{prob1} 
Provide necessary and sufficient infinitesimal conditions (involving only $B$, $F$, and the set $C$) such that the following property holds:
\begin{enumerate}[label={($\star$)},leftmargin=*]
\item \label{item:star} The scalar function $B$ is nonincreasing along the solutions to $\mathcal{H}_f$; namely, for every solution $t \mapsto \phi(t)$ to $\mathcal{H}_f$, the map $t \mapsto B(\phi(t))$ is nonincreasing \footnote{Or, equivalently,  $B(\phi(t_1)) \leq B(\phi(t_2))$ for all $(t_1,t_2) \in \dom \phi \times \dom \phi$ with  $t_1 \geq t_2$.}.
\end{enumerate}  
\end{problem}

\subsection{Motivational Application}

In addition to the theoretical motivation mentioned in Section~\ref{sec:Introduction-Motivation}, Problem \ref{prob1} naturally emerges when studying safety for hybrid systems using barrier functions 
\cite{CP5-SIMUL4-ACC2019,10.1145/3365365.3382215}. More precisely, given a hybrid system of the form $\mathcal{H} := (C,F,D,G)$ (see Remark \ref{remHyb}), 
given  a set of initial conditions  $X_o \subset \cl(C) \cup D$ and an unsafe set $X_u \subset \mathbb{R}^n$, the hybrid system $\mathcal{H}$ is said to be safe with respect to $(X_o,X_u)$ if the solutions starting from $X_o$ never reach the set $X_u$. To certify safety with respect to $(X_o, X_u)$, scalar functions $B : \mathbb{R}^n \rightarrow \mathbb{R}$ satisfying 
\begin{align} 
B(x) & > 0 \quad   \forall x \in 
X_u \label{eq.2bis-}
 \\
B(x) & \leq 0  \quad  \forall x \in X_o, \label{eq.2bis}
\end{align}
named barrier function candidates, is used in \cite{prajna2007framework, ames2014controlbis, glotfelter2017nonsmooth}, among many others. A barrier function candidate guarantees safety for $\mathcal{H}$ with respect to $(X_o,X_u)$ if the following properties hold:
\begin{align}
 B(\eta) & \leq 0 \quad \forall \eta \in G(x), \quad \forall x \in K \cap D, \quad \text{and} \label{eqjump}
\end{align}
\begin{enumerate}
[label={($\star \star$)},leftmargin=*]
\item \label{item:starstar} The function $B$ is nonincreasing along the solutions to $\mathcal{H}_f=(C \backslash \mbox{int}(K),F)$, where 
$$ K := \left\{x \in \mbox{cl}(C) \cup D : B(x) \leq 0 \right\}. $$
\end{enumerate} 
In particular, \ref{item:starstar} guarantees that solutions to $\HS$ from $K$ cannot flow out of $K$, while \eqref{eqjump}
assures that such solutions cannot jump from $K \cap D$ to a point outside of $K$.
 Note that condition \eqref{eqjump} is already infinitesimal. Furthermore, we recover in \ref{item:starstar} the non-increase condition along the solutions to a constrained system. Hence, it is natural that one wants to replace \ref{item:starstar} by sufficient  infinitesimal conditions, which will depend on whether $B$ is smooth or not.  

On the other hand, the converse safety problem pertains to showing, when $\mathcal{H}$ is safe with respect to $(X_o,X_u)$, the existence of a barrier function candidate $B : \mathbb{R}^n \rightarrow \mathbb{R}$ such 
that \eqref{eqjump} and \ref{item:starstar} are satisfied. Note that this converse problem is addressed in \cite{CP5-SIMUL4-ACC2019,10.1145/3365365.3382215} by constructing a barrier function $B$ that depends on both $x$ and the (hybrid) time. However, one still needed to show that the constructed barrier function enjoys some smoothness properties to replace \ref{item:starstar} by an \textit{equivalent infinitesimal} condition  -- which, as pointed out in Section~\ref{sec:Introduction-Background},  is a solution-independent condition (as in Lyapunov stability theory). The latter is addressed for unconstrained continuous-time systems in \cite{CP5-SIMUL2-ACC2019}. However, once tackling the constrained case, Problem \ref{prob1} is faced.  

\subsection{Existing Results in the Unconstrained Case}

 Existing solutions to Problem \ref{prob1} in the unconstrained case, i.e. $C = \mathbb{R}^n$, include the ones listed below \footnote{The first two solutions can be derived easily.}:
\begin{itemize}
\item When $n=1$, $B$ is continuously differentiable, and $F \equiv 1$: the function $B$ is nonincreasing along the solutions to $\mathcal{H}_f = (C,F)$ if and only if 
$\nabla B(x) \leq 0$ for all $x \in \mathbb{R}$.

\item When $n \geq 1$, and $F$ satisfies Assumption \ref{item:difinc}, the continuously differentiable function $B$ is nonincreasing along the solutions to 
$\mathcal{H}_f = (C,F)$ if
\begin{align} \label{eq.c1}
\langle \nabla B(x), \zeta \rangle \leq 0 \qquad \forall \zeta \in F(x), \quad  \forall x \in \mathbb{R}^n.
\end{align} 
The equivalence is true when, additionally,  $F$ is continuous.

\item When the function $B$ is only continuous, 
the standard gradient $\nabla B$ cannot be used. Existing solutions to Problem \ref{prob1}, in this case, use the \textit{directional subderivative}. Indeed, in the simple case where $n = 1$ and $F \equiv 1$, the following result is available in \cite[Page 3]{clarke2008nonsmooth}.

\begin{lemma} \label{lemDini}
A continuous function $B : \mathbb{R} \rightarrow \mathbb{R}$ is nonincreasing if and only if 
\begin{align} \label{eq:Dini}
DB(x) := \liminf_{t \rightarrow 0^+} \frac{B(x+t) - B(x)}{t} \leq 0 \qquad  \forall x \in \mathbb{R}.
\end{align}
\end{lemma}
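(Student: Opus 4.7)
The plan is to prove the two directions separately. The forward implication is immediate: if $B$ is nonincreasing then $B(x+t) - B(x) \leq 0$ for every $t>0$, so each difference quotient in the definition of $DB(x)$ is nonpositive, and the $\liminf$ inherits this.

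For the reverse implication, I would proceed by contradiction-style argument with an $\epsilon$-perturbation. Fix $a < b$ and $\epsilon > 0$; the goal is to show $B(b) \leq B(a) + \epsilon(b-a)$, which upon letting $\epsilon \to 0^+$ yields the desired monotonicity. Define
\[
S_\epsilon := \bigl\{t \in [a,b] : B(t) \leq B(a) + \epsilon(t-a)\bigr\},
\]
and let $s := \sup S_\epsilon$. Since $a \in S_\epsilon$, the set is nonempty, and since $B$ is continuous, $S_\epsilon$ is closed in $[a,b]$, so $s \in S_\epsilon$, giving $B(s) \leq B(a) + \epsilon(s-a)$.

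The key step is to rule out $s < b$. Suppose $s < b$. By hypothesis $DB(s) \leq 0 < \epsilon$, and unpacking the definition of $\liminf$, for every $\eta > 0$ there exists $t \in (0,\eta)$ with $\frac{B(s+t) - B(s)}{t} < \epsilon$, i.e. $B(s+t) < B(s) + \epsilon t$. Choosing $\eta < b - s$ so that $s + t \in (s, b]$, we obtain
\[
B(s+t) < B(s) + \epsilon t \leq B(a) + \epsilon(s-a) + \epsilon t = B(a) + \epsilon(s+t-a),
\]
so $s+t \in S_\epsilon$, contradicting $s = \sup S_\epsilon$. Hence $s = b$, which gives $B(b) \leq B(a) + \epsilon(b-a)$; letting $\epsilon \to 0^+$ concludes.

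The main obstacle is the careful handling of the one-sided $\liminf$: since $DB(s) \leq 0$ only guarantees the existence of \emph{some} arbitrarily small $t > 0$ with a difference quotient below $\epsilon$, rather than a bound valid for all small $t$, one must make sure the argument only needs a single such $t$ to advance past the supremum, which is precisely what the $\sup$ construction provides. Everything else (closedness of $S_\epsilon$, the two-sided extension to any $a<b$, and passing to the limit in $\epsilon$) is routine.
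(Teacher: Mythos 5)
Your proof is correct. The paper does not actually prove Lemma~\ref{lemDini} --- it is quoted from \cite[Page 3]{clarke2008nonsmooth} without proof --- and your argument (the trivial forward direction, plus the $\epsilon$-perturbed supremum set $S_\epsilon$ whose closedness uses the continuity of $B$ and whose supremum is pushed past any $s<b$ by a single small $t$ extracted from the one-sided $\liminf$) is precisely the standard proof of this classical fact.
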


\item When $n \geq 1$, $F$ satisfies Assumption \ref{item:difinc}, $B$ is nonincreasing along the solutions to $\mathcal{H}_f = (\mathbb{R}^n,F)$ if
\begin{align} \label{eq.c1+}
\liminf_{\begin{matrix} w \rightarrow \zeta \\ t \rightarrow 0^+ \end{matrix}} \frac{B(x + t w) - B(x)}{t} \leq 0 \qquad \forall \zeta \in F(x), \quad \forall x \in \mathbb{R}^n.
\end{align}
The equivalence is true when, additionally, $F$ is continuous; see \cite{aubin2012differential, sontag1995nonsmooth}.

\item When $n \geq 1$, $F$ satisfies Assumption \ref{item:difinc}, and $B$ is locally Lipschitz, $B$ is nonincreasing along the solutions to $\mathcal{H}_f = (\mathbb{R}^n,F)$ if \cite{sanfelice2007invariance,clarke1990optimization}
\begin{align} \label{eq.c1++}
\langle \eta, \zeta \rangle \leq 0 \qquad \forall \zeta \in F(x), \quad  \forall \eta \in \partial_C B(x), \quad  \forall x \in \mathbb{R}^n.
\end{align}
The equivalence is true when, additionally, $F$ is continuous and $B$ is regular. Recall that $\partial_C B : \mathbb{R}^n \rightrightarrows \mathbb{R}^n$ is the Clarke generalized gradient of $B$,  
which, according to the equivalence in  \cite[Theorem 8.1, Page 93]{clarke2008nonsmooth}, can be defined as follows. 
\begin{definition} [Clarke generalized gradient] \label{defgen}
Let $\Omega$ be any subset of zero measure in $\mathbb{R}^n$,
and let $\Omega_B$ be the zero-measure set of points in $\mathbb{R}^n$ at which $B$ fails to be differentiable. Then, the Clarke generalized gradient at $x$ is defined as
\begin{align} \label{eq.gg}
\partial_C B(x) := \co \left\{ \lim_{i \rightarrow \infty} \nabla B(x_i) : x_i \rightarrow x,~x_i \notin \Omega_B,~x_i \in \Omega \right\}.
\end{align}
\end{definition} 
Furthermore, the regularity of 
$B$ is defined below, following \cite[Proposition 7.3, Page 91]{clarke2008nonsmooth}. 

\begin{definition}[Regular functions] \label{def.reg}
A locally Lipschitz function $B : \mathbb{R}^n \rightarrow \mathbb{R}$ is regular if 
$\epi B$ is regular; namely, $T_{\epi B}(x) = C_{\epi B}(x)$ for all $x \in \epi B$.
\end{definition}

\item When $n \geq 1$, $F$ satisfies Assumption \ref{item:difinc}, and $B$ is locally Lipschitz and regular, $B$ is nonincreasing along the solutions to $\mathcal{H}_f = (\mathbb{R}^n,F)$ if \cite{bacciotti2004nonsmooth,bacciotti1999stability,refId0}
\begin{equation}
\label{eqlipnew1}
\begin{aligned} 
\langle \eta , \zeta \rangle & \leq 0 & \forall \eta \in \partial_C B(x), \quad \forall \zeta \in F(x) : \exists c \in \mathbb{R} : \langle \eta ,  \zeta \rangle = c ~ \forall \eta \in  \partial_C B(x),  \\ & & \forall x \in \mathbb{R}^n.
\end{aligned}
\end{equation}
Equivalence holds when, additionally, $F$ is continuous. Compared to \eqref{eq.c1++}, in \eqref{eqlipnew1}, we check the inequality only for vector fields that yield the same scalar product with all the vectors in $\partial_C B$.

\item When $n \geq 1$, $F$ satisfies Assumption \ref{item:difinc} and, additionally, $F$ is continuous, and $B$ is locally Lipschitz, $B$ is nonincreasing along the solutions to $\mathcal{H}_f = (\mathbb{R}^n,F)$ if and only if  \cite{della2021piecewise}
\begin{align} \label{eqlipnew2}
\langle \nabla B(x) , \zeta \rangle \leq 0 \qquad  \forall \zeta \in F(x), \quad \forall x \in \mathbb{R}^n ~ \text{such that} ~ \nabla B(x) ~ \text{exists}.
\end{align}
Compared to \eqref{eq.c1++}, in \eqref{eqlipnew2}, we check the inequality 
for all vectors in $F(x)$ but at points $x$ where the gradient of $B$ is well defined.  

\item When the function $B$ is lower semicontinuous, $n \geq 1$, and $F$ is locally Lipschitz with closed and convex images, $B$ is nonincreasing along the solutions to $\mathcal{H}_f = (\mathbb{R}^n,F)$ if and only if \cite[Theorem 6.3]{clarke2008nonsmooth}
\begin{align} \label{eq.c1++-}
\langle \eta, \zeta \rangle \leq 0  \qquad \forall \zeta \in F(x), \quad \forall \eta \in \partial_P B(x), \quad  \forall x \in \mathbb{R}^n,
\end{align} 
where $\partial_P B : \mathbb{R}^n \rightrightarrows \mathbb{R}^n $ is the proximal subdifferential of $B$, which is defined below.
\begin{definition}[Proximal subdifferential \cite{clarke2008nonsmooth}]  \label{defps}
The \textit{proximal subdifferential} of a function $B: \mathbb{R}^n \rightarrow \mathbb{R}$ is the set-valued map $ \partial_P B : \mathbb{R}^n \rightrightarrows \mathbb{R}^n $ such that, for all $x \in \mathbb{R}^n$,
\begin{align} \label{eq.subgrad}
\partial_P B(x) := \left\{ \eta \in \mathbb{R}^n : [\eta^\top~-1]^\top \in 
N^P_{\epi B} (x, B(x)) \right\}.
\end{align}
\end{definition}

\begin{remark}
When $B$ is twice continuously differentiable, $\partial_P B(x) = \left\{ \nabla B(x) \right\}$. Moreover, the latter equality holds also when $B$ is only continuously differentiable provided that $\partial_P B(x) \neq \emptyset$. 
\end{remark} 
\end{itemize}

\section{Challenges in the Constrained Case}
In this section, we illustrate why the conditions in \eqref{eq.c1++}-\eqref{eq.c1++-} do not solve Problem \ref{prob1} in the general constrained case. For this purpose, we introduce the following 
useful set $\tilde{C}$:
\begin{align} \label{eqsetCtilde} 
\tilde{C} := \left\{ x \in \mbox{cl}(C) : \exists \phi \in \mathcal{S}(x),~ \dom \phi \neq \left\{ 0 \right\} \right\},
\end{align}
where $\mathcal{S}(x)$ is the set of solutions starting 
from $x$. 

\begin{remark}
For a constrained system $\mathcal{H}_f = (C,F)$, there are numerous solutions-independent methods to find the set $\tilde{C}$, i.e., to know whether, from $x_o \in C$, a nontrivial solution exists or not. In the following, we recall some of such conditions:

\begin{itemize}
\item When $F(x_o) \cap T_C(x_o) = \emptyset$, we conclude that each solution to $\mathcal{H}_f$ starting from $x_o$ is trivial; see \cite[Proposition 3.4.1]{Aubin:1991:VT:120830}.

\item When there exists a neighborhood $U(x_o)$ such that $F(x) \cap T_C(x) \neq \emptyset$  for all $x \in U(x_o) \cap \cl(C)$, then there exists a non-trivial solution to $\mathcal{H}_f$ starting at $x_o$; see \cite[Proposition 3.4.2]{Aubin:1991:VT:120830}.

\item  When $F(x_o) \subset D_{C}(x_o)$, where 
$$ D_{C}(x_o) := \left\{v \in \mathbb{R}^n:\exists \epsilon,\alpha >0:x+(0,\alpha](v+\epsilon \mathbb{B}) \subset C \right\}, $$ 
then there exists a nontrivial solution to $\mathcal{H}_f$ starting from $x_o$; see \cite[Theorem 4.3.4]{Aubin:1991:VT:120830}.   
\end{itemize} 
Other results can be derived when, additionally, the set $C$ is convex or $F$ is locally Lipschitz; see \cite{clarke2008nonsmooth}. These techniques are well established in the literature and not within the scope of our paper. In our case, we start from a constrained system $\mathcal{H}_f = (C,F)$ for which we are able to find $\tilde{C}$. 
\end{remark}

When the set $\tilde{C}$ is not open; namely, nontrivial solutions to $\mathcal{H}_f$ start from $\partial C$, the solutions to Problem \ref{prob1} in \eqref{eq.c1}, \eqref{eq.c1++}, and \eqref{eq.c1++-} are not applicable. Indeed, suppose that the set $\tilde{C}$ is closed. When $x \in \partial C \cap C$, only vectors in $F(x)$ that generate nontrivial solutions should be considered in the conditions solving Problem \ref{prob1}. Otherwise, the conditions will not be necessary. In particular, the vectors in 
$F(x) \backslash T_C(x)$ must not be included. Hence, we propose to modify the conditions \eqref{eq.c1}, \eqref{eq.c1++}, and \eqref{eq.c1++-}, respectively, as:
\begin{align} 
 \langle \nabla B(x), \zeta \rangle & \leq 0 \qquad \forall \zeta \in F(x) \cap T_C(x), \quad  \forall x \in \tilde{C}. \label{eq.c1.} \\ 
 \langle \eta, \zeta \rangle  & \leq 0 \qquad \forall \zeta \in F(x) \cap T_C(x), \quad \forall \eta \in \partial_C B(x), \quad \forall x \in \tilde{C}. \label{eq.c1++.}  \\
 \langle \eta, \zeta \rangle & \leq 0 \qquad \forall \zeta \in F(x) \cap T_C(x), \quad  \forall \eta \in \partial_P B(x), \quad  \forall x \in \tilde{C}. \label{eq.c1++-.}
\end{align}
The new conditions \eqref{eq.c1.}-\eqref{eq.c1++-.} still fail to be necessary. Indeed, in the following example, we consider a situation where $F$ is locally Lipschitz with closed and convex images, the set $C = \tilde{C}$ is closed, and the continuously differentiable function $B$ is nonincreasing along the solutions but, for some $x_o \in \tilde{C}$, there exist $v_o \in F(x_o) \cap T_C(x_o)$ such that inequality in \eqref{eq.c1.} is not satisfied.  

\begin{example} \label{expp}
Consider the system $\mathcal{H}_f = (C,F)$ with 
$x \in \mathbb{R}^2$, 
\begin{align*}
F(x) := \co \left\{ [1 \quad 0]^\top, ~[-\cos(x_1^2) \quad \sin(x_1^2)]^\top \right\} \qquad \forall x \in C, 
\end{align*}
 and $ C := \left\{ x \in \mathbb{R}^2 : x_2 = 0 \right\}$. Furthermore, consider the function $B(x) := - x_1$. 
\begin{figure}
    \centering
\includegraphics[width=0.5 \columnwidth]{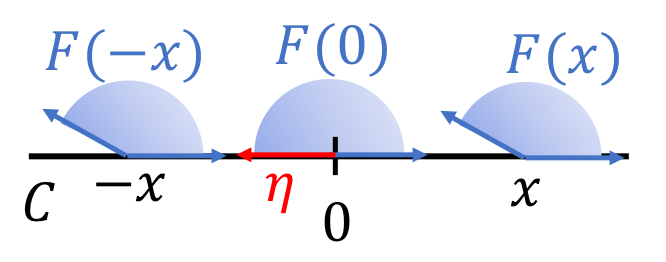} 
    \caption{Illustration of the data of the system $\mathcal{H}_f$ in Example \ref{expp}.}
    \label{Fig:1}
\end{figure} 

Note that $F$ is locally Lipschitz and has closed and convex images. Furthermore, starting from each initial condition
$x_o := [x_{o1} \quad x_{o2}]^\top \in C$, the only nontrivial solution is given by $ \phi(t) := [x_{o1} + t \quad 0]^\top $ for all $t \geq 0$; hence, $\tilde{C} = C$ and $B$ is nonincreasing along each nontrivial solution. However, for $x_o = 0$, we show that, for $v_o := [-1 \quad 0]^\top \in F(0) \cap T_C(0)$, condition \eqref{eq.c1.} is not satisfied. Indeed, we note that 
$ \langle \nabla B(0), v_o \rangle = 1 > 0$.
\end{example} 

On the other hand, the vectors in $F(x)$ not generating solutions may affect the global behavior of the solutions in a way that they fail to render the map $t \mapsto B(\phi(t))$ nonincreasing. The latter is more likely to happen when $B$ is discontinuous. Consequently, assumptions on some elements of $F(x)$ not generating solutions should be considered, otherwise, the conditions can fail to be sufficient. In the following example, we propose a constrained system $\mathcal{H}_f=(C,F)$ where $F$ is locally Lipschitz with closed and convex images, the set $C$ is closed, and \eqref{eq.c1++-.} is satisfied. However, the lower semicontinuous function $B$ fails to be nonincreasing along solutions.

\begin{example}  \label{exp1}
Consider the system $\mathcal{H}_f = (C,F)$ with 
$x \in \mathbb{R}^2$, 
\begin{align*}
F(x) :=  \left\{ \begin{matrix} [1 \quad  [-1,1] x_1]^\top & \mbox{if}~x_1 \geq 0 
\\ [1 \quad  0]^\top & 
\mbox{if}~x_1 < 0 \end{matrix} \right. \qquad
\qquad \forall x \in C, 
\end{align*}
\begin{align*} 
C := \left\{ x \in \mathbb{R}^2 : |x_2| \geq x_1^2 \right\} 
 & \cup \left\{ x \in \mathbb{R}^2 : x_1 \leq 0 \right\} \cup \left\{ x \in \mathbb{R}^2 : x_2 = 0 \right\}. 
\end{align*} 
\begin{figure}
    \centering
\includegraphics[width=0.5\columnwidth]
{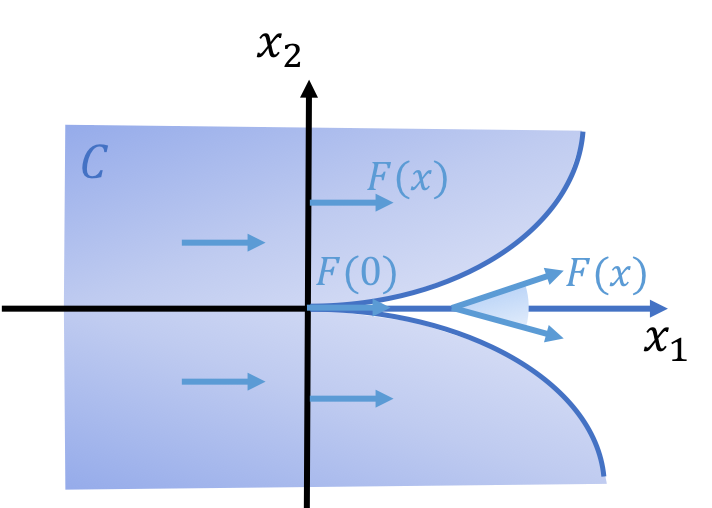} 
    \caption{Illustration of the data of the system $\mathcal{H}_f$ in Example \ref{exp1}.}
    \label{Fig:3}
\end{figure} 
Furthermore, consider the lower semicontinuous function 
$$ B(x) := \left\{ \begin{matrix} 0 & \mbox{if}~x_2 \leq 0 \\  1 &  ~ \mbox{if}~x_2 > 0.
\end{matrix}  \right.
$$
We will show that in this case condition \eqref{eq.c1++-.} holds, but the function $B$ is not 
nonincreasing along the solutions to $\mathcal{H}_f$. Indeed, we start noting that
\begin{align*}
\epi B & = \left\{ (x,r) \in \mathbb{R}^3 : x_2 \leq 0,~r \geq 0  \right\} \cup  \left\{ (x,r) \in \mathbb{R}^3 : x_2 > 0,~r \geq  1  \right\}, \\
\epi B  \cap (C \times \mathbb{R}) & =  \left\{ (x,r) \in \mathbb{R}^3 : x_2 \leq 0,~r \geq 0,~x_1 \leq \sqrt{-x_2} \right\} \cup \\ &  \left\{ (x,r) \in \mathbb{R}^3 : x_2 > 0,~r \geq  1,~x_1 \leq \sqrt{x_2} \right\} 
\\ &
\cup \left\{ (x,r) \in \mathbb{R}^3 : x_1 \geq  0,~r \geq 0,~x_2 = 0 \right\}.
\end{align*}  

\begin{figure}
    \centering
\includegraphics[width=0.5 \columnwidth]{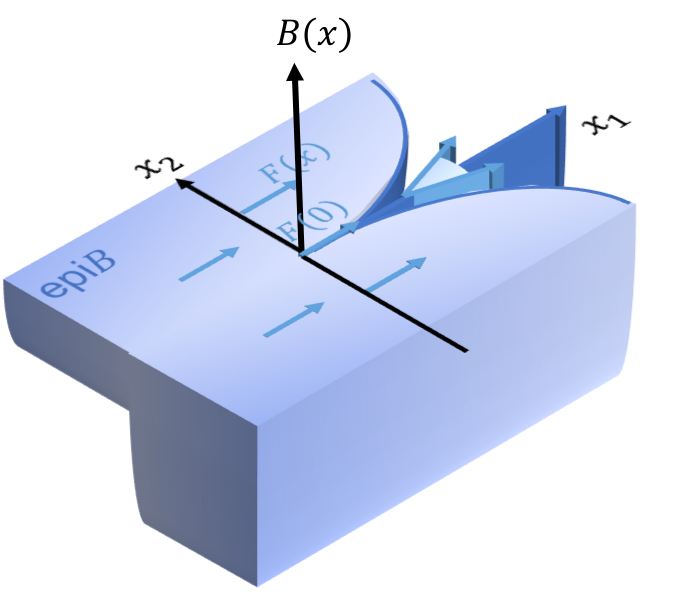} 
    \caption{Illustration of the set $\epi B \cap (C \times \mathbb{R})$ in Example \ref{exp1}.}
    \label{Fig:2}
\end{figure} 

Furthermore, note that $F$ is locally Lipschitz with closed and convex images. Now, to show that \eqref{eq.c1++-.} is satisfied, we start noticing that 
\begin{align*}
\partial (\epi B)  \cap (C \times \mathbb{R}) & =  \left\{ (x,r) \in \mathbb{R}^3 : x_2 < 0,~r = 0,~x_1 \leq \sqrt{-x_2} \right\} 
\\ &
\cup \left\{ (x,r) \in \mathbb{R}^3 : x_2 > 0,
~r = 1,~x_1 \leq \sqrt{x_2} \right\} 
\\ &
\cup  \left\{ (x,r) \in \mathbb{R}^3 : x_2 = 0,
~ 0 \leq r \leq 1 \right\}.
\end{align*} 
That is, for each $x \in C$, thus $(x,B(x)) \in \partial (\epi B) \cap (C \times \mathbb{R})$, we have $[F(x) \cap T_C(x) \quad 0]^\top \subset T_{\partial (\epi B) \cap (C \times \mathbb{R})}(x,B(x))$; hence, \eqref{eq.c1++-.} follows using \cite[Proposition 3.2.3]{Aubin:1991:VT:120830}, the fact that $$[\partial_P B(x) \quad -1] \subset N^P_{\epi B \cap (C \times \mathbb{R})}(x,B(x)) \subset N_{\epi B \cap (C \times \mathbb{R})}(x,B(x))
\qquad  \forall x \in C, $$ 
and since $\tilde{C} \subset C$ when $C$ is closed. Finally, in order to show that the function $B$ is not nonincreasing along solutions, we consider the function $(\phi(t),B(x_o)):=[t \quad t^2 \quad 0]^\top \in (C \times \mathbb{R})$ for all $t \geq 0$, which is absolutely continuous and solution to the differential equation 
$(\dot{x},\dot{r}) = ([1 \quad x_1]^\top, 0) \in (F(x), 0)$.
\end{example}

To manage such a compromise, extra assumptions on the data $(C,F)$ of the system $\mathcal{H}_f$ need to be made. 

\section{Main Results}

In this section, we formulate necessary and sufficient infinitesimal conditions solving Problem \ref{prob1} when the set $C$ in $\mathcal{H}_f$ given in \eqref{dif.inc} is not necessarily $\mathbb{R}^n$, not necessarily open, and nontrivial solutions are allowed to start from $\partial C$. 

\subsection{When $B$ is Lower Semicontinuous and $\tilde{C}$ is Generic} The proposed approach, in this case, is based on transforming Problem \ref{prob1} into the characterization of forward pre-invariance of a closed set for an augmented constrained differential inclusion, as described in the following lemma. The proof is in the appendix. Recall that the set $\tilde{C}$ is defined in \eqref{eqsetCtilde}.
\begin{lemma} \label{lemepiB}
Consider a constrained differential inclusion 
$\mathcal{H}_f = (C,F)$. A lower semicontinuous function $B : \mathbb{R}^n \rightarrow \mathbb{R}$ satisfies \ref{item:star} in Problem \ref{prob1} if and only if the set $\epi B \cap (\mbox{cl}(C) \times \mathbb{R})$ is \textit{forward pre-invariant} for the extended constrained differential inclusion in \eqref{aug.inc}.
\end{lemma}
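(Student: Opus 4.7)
This is a biconditional that rests on exploiting the skew structure of \eqref{aug.inc}: since $\dot{r}\equiv 0$, every solution to the augmented system takes the form $\psi(t) = (\phi(t), r_0)$, where $\phi$ is a solution to $\mathcal{H}_f$ and $r_0 \in \mathbb{R}$ equals the constant value of the $(n{+}1)$-st component. Membership of $\psi(t)$ in $\epi B \cap (\mbox{cl}(C)\times \mathbb{R})$ therefore reduces to the two scalar conditions $B(\phi(t))\leq r_0$ and $\phi(t)\in \mbox{cl}(C)$. The latter holds automatically from the definition of a solution to $\mathcal{H}_f$, so the whole statement collapses to comparing $B(\phi(t))$ with $r_0$. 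I will prove the two implications separately.

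\textbf{Necessity.} Assume \ref{item:star}. Fix any solution $\psi=(\phi,r_0)$ to \eqref{aug.inc} with $\psi(0)\in \epi B\cap(\mbox{cl}(C)\times\mathbb{R})$. Then $\phi$ is a solution to $\mathcal{H}_f$, $r_0\geq B(\phi(0))$, and \ref{item:star} yields $B(\phi(t))\leq B(\phi(0))\leq r_0$ for all $t\in \dom \phi$. Combined with $\phi(t)\in C\subset\mbox{cl}(C)$ on $\mbox{int}(\dom \phi)$ and, by continuity of $\phi$ together with closedness of $\mbox{cl}(C)$, also at any endpoint of $\dom\phi$, this gives $\psi(t)\in \epi B\cap (\mbox{cl}(C)\times \mathbb{R})$ for every $t\in\dom\psi$.

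\textbf{Sufficiency.} Assume forward pre-invariance of $\epi B\cap(\mbox{cl}(C)\times\mathbb{R})$. Given a solution $\phi$ to $\mathcal{H}_f$ and any $t_2\in\dom\phi$, I introduce the time-shifted trajectory $\tilde\phi(t) := \phi(t+t_2)$ on $\{t\geq 0: t+t_2\in \dom\phi\}$. Since both $F$ and $C$ are time-invariant, $\tilde\phi$ is again a solution to $\mathcal{H}_f$ with $\tilde\phi(0)=\phi(t_2)\in\mbox{cl}(C)$. Consequently, the augmented trajectory $\psi(t):=(\tilde\phi(t), B(\phi(t_2)))$ solves \eqref{aug.inc} and satisfies $\psi(0)\in \epi B\cap(\mbox{cl}(C)\times\mathbb{R})$. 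Forward pre-invariance forces $B(\tilde\phi(t))\leq B(\phi(t_2))$ for every admissible $t$; evaluating at $t=t_1-t_2$ for arbitrary $t_1\geq t_2$ in $\dom\phi$ delivers \ref{item:star}.

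\textbf{Main obstacle.} There is no deep obstruction: the whole argument is a bookkeeping exercise that leverages the constancy of the $r$-coordinate to identify ``epigraph membership preserved along augmented solutions'' with ``$B\circ\phi$ nonincreasing''. The only points requiring care are (i) verifying that $\tilde\phi$ (hence $\psi$) qualifies as a solution at $t_2=0$ and at any right endpoint of $\dom\phi$, which is handled precisely by \ref{itemS00} together with the fact that $\phi(t_2)\in\mbox{cl}(C)$ for every $t_2\in\dom\phi$, and (ii) noting that $\dom\psi=\dom\tilde\phi$ because the state-constraint condition \ref{itemS01} transfers verbatim from $\mathcal{H}_f$ to \eqref{aug.inc}. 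Lower semicontinuity of $B$ is not invoked here; it enters only in the subsequent translation of this pre-invariance characterization into the infinitesimal proximal-subdifferential conditions advertised in the Contributions.
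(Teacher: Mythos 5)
Your proof is correct and follows essentially the same route as the paper's: both directions hinge on the observation that every solution to \eqref{aug.inc} has the form $(\phi,r_0)$ with $\phi$ a solution to $\mathcal{H}_f$ and $r_0$ constant, so membership in $\epi B \cap (\cl(C)\times\mathbb{R})$ reduces to checking $B(\phi(t))\leq r_0$. If anything, your sufficiency argument is slightly more careful than the paper's, since the time-shift $\tilde\phi(t):=\phi(t+t_2)$ covers an increase between arbitrary $t_2\leq t_1$ in $\dom\phi$, whereas the paper's contradiction argument explicitly treats only an increase from the initial time and leaves the reduction to that case (via the same shift-invariance of solutions) implicit.
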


\begin{proof}
To prove necessity of $\epi B \cap (\mbox{cl}(C) \times \mathbb{R})$ being \textit{forward pre-invariant}, we consider a nontrivial solution 
$(\phi,r) : \dom \phi \rightarrow \mathbb{R}^{n+1}$ starting from $ (x_o, r_o) \in \partial (\epi B \cap (\mbox{cl}(C) \times \mathbb{R}))$ such that $t \mapsto B(\phi(t))$ is nonincreasing on $\dom \phi$ (note that solutions from $\mbox{int}(\epi B \cap (\mbox{cl}(C) \times \mathbb{R}))$ reaching the boundary are already covered by this case). From the definition of the solutions to \eqref{dif.inc}, we conclude that $\phi(t) \in \mbox{cl}(C)$ for all $t \in \dom \phi$. So, to complete the proof of the necessary part, it is enough to show that 
$(\phi(t), r(t)) \in \epi B$ for all $t \in \dom \phi$. Indeed, $ (x_o, r_o) \in \partial (\epi B \cap (\mbox{cl}(C) \times \mathbb{R})) $ implies that either $(x_o, r_o) \in (\mbox{int} (\epi B) \cap (\mbox{cl}(C) \times \mathbb{R}))$, thus $r_o > B(x_o)$, or $ (x_o, r_o) \in (\partial (\epi B) \cap (\mbox{cl}(C) \times \mathbb{R}))$, thus $r_o = B(x_o)$. Hence, in both cases $r_o \geq B(x_o)$. Moreover, since $t \mapsto B(\phi(t))$ is nonincreasing, it follows that $r_o \geq B(\phi(t))$ for all $t \in \dom \phi$ since $\phi(0) = x_o \in \partial C$. The latter fact implies that the solution $(\phi,r)$ satisfies $(\phi(t), r(t)) \in \epi B$ for all $t \in \dom \phi$ and necessity follows. 

To prove the sufficient part, we use a contradiction argument. Suppose there exists 
$x_o \in \mbox{cl}(C)$ and a nontrivial solution $\phi$ to $\mathcal{H}_f$ such that, for some $\epsilon>0$, $B(\phi(t)) > B(\phi(0))$ for all $t \in (0,\epsilon]$. Since the set 
$\epi B \cap (\mbox{cl}(C) \times \mathbb{R})$ is forward pre-invariant, every solution 
$(\phi,r)$ starting from $(x_o, B(x_o)) \in \partial (\epi B) \cap (\mbox{cl}(C) \times \mathbb{R})$ remains in 
$\epi B \cap (\mbox{cl}(C) \times \mathbb{R})$ for all $t \in \dom \phi$. 
The latter fact implies that $(\phi(t), B(x_o)) \in \epi B$ for all $t \in \dom \phi$; hence, $B(x_o) \geq B(\phi(t))$ for all $t \in \dom \phi$, which yields a contradiction. 
\end{proof}

Forward pre-invariance has been extensively studied in the literature, see, e.g., \cite{Aubin:1991:VT:120830, clarke2008nonsmooth}. Infinitesimal conditions for forward pre-invariance involving $F$ and tangent cones with respect to the considered closed set are shown to be necessary and sufficient when $C \equiv \mathbb{R}^n$. Our approach, in this case, is based on characterizing forward pre-invariance of the set $\epi B \cap (\mbox{cl}(C) \times \mathbb{R})$ using infinitesimal conditions. \\

Consider the following assumptions on the data $(C,F)$ of $\mathcal{H}_f$:

\begin{enumerate}[label={(M\arabic*)},leftmargin=*]
\item \label{item:A8} For each $x_o \in \partial C \cap \tilde{C}$, if $F(x_o) \cap T_C(x_o) \neq \emptyset$ then, for each $v_o \in F(x_o) \cap T_C(x_o)$,  there exist $U(x_o)$ -- a neighborhood of $x_o$ --  and a continuous selection 
$ v : \partial C \cap U(x_o) \rightarrow \mathbb{R}^n $ such that $v(x) \in F(x) \cap T_C(x)$ for all $x \in \partial C \cap U(x_o) $ and $v(x_o) = v_o$.
\item \label{item:A10} For each $ x_o \in \partial C \cap \tilde{C}$,  there exists $U(x_o)$ -- a neighborhood of $x_o$ --  such that $F(x) \subset T_C(x)$ for all $x \in U(x_o) \cap \partial C$.
\end{enumerate}

The need for \ref{item:A8} and \ref{item:A10} is discussed in Remarks \ref{rem2} and \ref{rem22nc}. Furthermore, we consider the following condition:
\begin{align} \label{eqqq-}
\langle \zeta, v  \rangle \leq 0 & \qquad  \forall [\zeta^\top~\alpha]^\top \in N^P_{\epi B \cap (C \times \mathbb{R})} (x, B(x)), \quad  \forall v \in F(x) \cap T_C(x), \quad  \forall x \in \tilde{C}.   
\end{align}

The following result solves Problem \ref{prob1}. Its proof is inspired from \cite[Theorem 5.3.4]{Aubin:1991:VT:120830} and \cite[Theorem 3.8]{clarke2008nonsmooth}.

\begin{theorem} \label{lemepiB1}
Consider a system $\mathcal{H}_f = (C,F)$ such that Assumption \ref{item:difinc} holds and, additionally, $F$ is continuous. 
Let $B : \mathbb{R}^n \rightarrow \mathbb{R}$ be a lower semicontinuous function. Then,

\begin{enumerate}[label={\arabic*.},
leftmargin=*]
\item \label{statwo} 
\ref{item:star} $+$ \ref{item:A8} 
$\Rightarrow$ \eqref{eqqq-}.
\item \label{statone} $F$ locally Lipschitz
$+$ \eqref{eqqq-} $+$ \ref{item:A10} $\Rightarrow$ \ref{item:star}.
\end{enumerate}

Consequently, when $F$ is locally Lipschitz and \ref{item:A8}-\ref{item:A10} hold, \ref{item:star} $\Leftrightarrow$ \eqref{eqqq-}.
\end{theorem}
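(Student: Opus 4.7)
The overarching strategy is to invoke Lemma~\ref{lemepiB} to translate \ref{item:star} into forward pre-invariance of the closed set $S := \epi B \cap (\mbox{cl}(C) \times \mathbb{R})$ for the augmented constrained inclusion \eqref{aug.inc}, and then characterize that pre-invariance infinitesimally via proximal normals. The final $\Leftrightarrow$ claim is an immediate combination of items~\ref{statwo} and \ref{statone}, so the task reduces to proving those two implications separately. Throughout, note that \eqref{eqqq-} is vacuous at any $x \in \tilde{C}$ with $x \notin C$, since then $(x,B(x)) \notin C \times \mathbb{R}$ and so $N^P_{\epi B \cap (C \times \mathbb{R})}(x,B(x)) = \emptyset$; hence we only need to treat $x \in \tilde{C} \cap C$.

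For item~\ref{statwo} (necessity), fix $x \in \tilde{C} \cap C$ and $v \in F(x) \cap T_C(x)$. The key ingredient is to produce a nontrivial solution $\phi$ to $\mathcal{H}_f$ with $\phi(0) = x$ and right derivative $v$ at $0$. If $x \in \mbox{int}(C)$ then $T_C(x) = \mathbb{R}^n$ and this is standard under Assumption~\ref{item:difinc} plus continuity of $F$. If $x \in \partial C \cap \tilde{C}$, Assumption~\ref{item:A8} yields a continuous selection $\bar v$ of $F \cap T_C$ on $\partial C \cap U(x)$ with $\bar v(x)=v$; combining this with the viability machinery of \cite[Ch.~3--4]{Aubin:1991:VT:120830} (exploiting that $F$ is u.s.c.\ with convex compact images) produces the desired $\phi$ with $\phi(t) \in C$ for $t>0$ small. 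Then \ref{item:star} gives $B(\phi(t)) \leq B(x)$, so $(\phi(t), B(x)) \in \epi B \cap (C \times \mathbb{R})$ for small $t>0$. Now given any $[\zeta^\top~\alpha]^\top \in N^P_{\epi B \cap (C \times \mathbb{R})}(x,B(x))$, the very definition of proximal normal yields some $\sigma>0$ with
\[
\langle \zeta, \phi(t) - x \rangle \;\leq\; \sigma |\phi(t) - x|^2
\]
for $t>0$ small enough. Dividing by $t$ and letting $t \to 0^+$ produces $\langle \zeta, v \rangle \leq 0$, which is \eqref{eqqq-}.

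For item~\ref{statone} (sufficiency), by Lemma~\ref{lemepiB} it suffices to show that $S = \epi B \cap (\mbox{cl}(C) \times \mathbb{R})$ is forward pre-invariant for \eqref{aug.inc}. The plan is a Clarke-style proximal-aiming/Gronwall argument in the spirit of \cite[Thm.~3.8]{clarke2008nonsmooth}: let $(\phi,r)$ be a nontrivial solution of \eqref{aug.inc} starting in $S$ (so $r(t) \equiv r_0$), consider the squared distance $d(t)^2 := |(\phi(t),r_0)|^2_S$, pick a projection $(\bar x(t), \bar r(t)) \in S$ of $(\phi(t), r_0)$, and use that the residual $\eta(t) := (\phi(t)-\bar x(t), r_0 - \bar r(t))$ is a proximal normal to $S$ at $(\bar x(t), \bar r(t))$. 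Combining Lipschitz continuity of $F$ (to replace $\dot\phi(t) \in F(\phi(t))$ by a vector in $F(\bar x(t))$ up to an error $\leq k |\phi(t)-\bar x(t)|$) with the proximal-normal inequality yields $\tfrac{d}{dt}d(t)^2 \leq 2k\,d(t)^2$ a.e., whence $d \equiv 0$ by Gronwall and $(\phi(t),r_0) \in S$ for all $t \in \dom \phi$.

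The main obstacle is that the hypothesis \eqref{eqqq-} controls proximal normals of $\epi B \cap (C \times \mathbb{R})$, whereas the invariance argument requires control of proximal normals of $S = \epi B \cap (\mbox{cl}(C) \times \mathbb{R})$; on $\partial C$ these cones can differ by contributions from $N^P_{\mbox{cl}(C)}$. This is exactly where Assumption~\ref{item:A10} enters: near points of $\tilde C \cap \partial C$ it forces $F(x) \subset T_C(x)$, so every $v \in F(\bar x(t))$ is paired nonpositively with any normal to $\mbox{cl}(C)$ at $\bar x(t)$, while \eqref{eqqq-} handles the remaining $\epi B$-normal part (using that $F(\bar x(t)) = F(\bar x(t)) \cap T_C(\bar x(t))$ there and that $\bar x(t) \in \tilde C$ because $\bar x(t)$ is the foot of a trajectory-point). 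Putting these two pieces together yields $\langle \eta(t), (\dot\phi(t),0)\rangle \leq 0$ modulo the Lipschitz error, closing the Gronwall loop; the consequence $\Leftrightarrow$ is then immediate from items~\ref{statwo} and \ref{statone}.
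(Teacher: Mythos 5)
Your proposal follows essentially the same route as the paper: reduce \ref{item:star} to forward pre-invariance of $\epi B \cap (\cl(C)\times\mathbb{R})$ via Lemma~\ref{lemepiB}, prove necessity by manufacturing a nontrivial solution tangent to a prescribed $v \in F(x)\cap T_C(x)$ from \ref{item:A8} (Michael extension of the selection plus the viability theorem) and reading off the normal inequality along it, and prove sufficiency by the Clarke proximal-aiming/Gronwall estimate on the distance to the set. Your necessity argument is fine and in fact slightly more direct than the paper's, which passes through $[v^\top~0]^\top\in T_{\epi B\cap(C\times\mathbb{R})}(x,B(x))$ and the polarity between contingent and normal cones rather than dividing the proximal-normal inequality by $t$.

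Two places in your sufficiency sketch would not survive as written, and the paper handles them differently. First, you justify $\bar x(t)\in\tilde C$ by saying it is ``the foot of a trajectory-point''; that is not an argument, since the projection of a trajectory point onto the set need not be a point admitting a nontrivial solution. The paper derives this from \ref{item:A10}: because $F\subset T_C$ on a neighborhood of $\partial C\cap\tilde C$, the viability theorem guarantees nontrivial solutions from every nearby constrained point, hence the projection lies in $\tilde C\times\mathbb{R}$. Second, you propose to split the proximal normal at the projection into an ``$N_{\cl(C)}$ part'' and an ``$\epi B$ part''; such a sum rule for normals of an intersection requires a transversality hypothesis you have not supplied. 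The paper avoids any decomposition by a case analysis on where the projection lands: if it lies in $\partial(\epi B)\cap(\tilde C\times\mathbb{R})$, the residual vector is a proximal normal to $\epi B\cap(C\times\mathbb{R})$ there and \eqref{eqqq-} applies directly (with $F=F\cap T_C$ by \ref{item:A10}); if it lies in $\mbox{int}(\epi B)\cap(\partial C\times\mathbb{R})$, then \ref{item:A10} gives $F_a\subset T_{\epi B\cap(C\times\mathbb{R})}$ at that point and tangency alone yields the nonpositive pairing. A minor aside: your claim that $N^P_{\epi B\cap(C\times\mathbb{R})}(x,B(x))=\emptyset$ whenever $x\notin C$ is not correct under the paper's definition (the cone at a point of the closure that is not in the set need not be empty), but nothing in your argument depends on it.
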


\begin{proof}
Using Lemma \ref{lemepiB}, items \ref{statone} and \ref{statwo} in Theorem \ref{lemepiB1} follow if the following two statements are proved, respectively.

\begin{enumerate}[label={\arabic*$\star$.},leftmargin=*]
\item \label{statonestar} 
The set $\epi B \cap (\mbox{cl}(C) \times \mathbb{R})$ is forward pre-invariant for \eqref{aug.inc} if \ref{item:A10} and \eqref{eqqq-} hold.
\item \label{statwostar} If the set $\epi B \cap ( \mbox{cl}(C) \times \mathbb{R})$ is forward pre-invariant for \eqref{aug.inc}  and \ref{item:A8} holds, then \eqref{eqqq-} holds.
\end{enumerate}

In order to prove item $2\star$, we assume that the set $\epi B \cap (\mbox{cl}(C) \times \mathbb{R})$ is forward pre-invariant, that is, for each $(x_o, B(x_o)) \in \partial (\epi B) \cap (\tilde{C} \times \mathbb{R})$, each nontrivial solution starting from 
$(x_o, B(x_o))$ remains in $\epi B \cap (\mbox{cl}(C) \times \mathbb{R})$ along its entire domain. Furthermore, let us pick $v_o \in F(x_o) \cap T_C(x_o)$ and using \ref{item:A8}, we conclude the existence of a continuous selection $ v : \partial C \cap U(x_o) \rightarrow \mathbb{R}^n $ such that $v(x) \in F(x) \cap T_C(x)$ for all $x \in \partial C \cap U(x_o) $ and $v(x_o) = v_o$. Moreover, since $F$ is continuous; thus, lower semicontinuous, and has closed and convex images, we use   
Michael's selection theorem \cite{michael1956continuous} to conclude that the continuous selection $v$ on $\partial C \cap U(x_o)$ can be extended to a continuous selection $w : U(x_o) \rightarrow \mathbb{R}^n$ such that $w(x) = v(x)$ for all $x \in \partial C \cap U(x_o)$. Next, using \cite[Proposition 3.4.2]{Aubin:1991:VT:120830}, we conclude the existence of a nontrivial solution $\phi$ starting from $x_o$ solution to the system  $\dot{x} = w(x)$; thus, $\phi$ is also solution to 
$\mathcal{H}_f$ in \eqref{dif.inc}. Next, since the set $\epi B \cap (\mbox{cl}(C) \times \mathbb{R})$ is forward pre-invariant, it follows that 
$t \mapsto (\phi(t),B(x_o))$ is also a solution to \eqref{aug.inc} satisfying $(\phi(t), B(x_o))  \subset \epi B \cap (\mbox{cl}(C) \times \mathbb{R})$ for all 
$t \in \dom \phi$.  Furthermore, consider a sequence 
$\left\{ t_i \right\}^{\infty}_{i=0} \rightarrow 0$ and let $v_i := \frac{\phi(t_i) - \phi(0)}{t_i}$. Now, since $\phi$ is solution to $\dot{x} = w(x)$ and $w$ is continuous, it follows that $\lim_{i \rightarrow \infty} v_i = v_o$. At the same time, having $(\phi(t_i), B(x_o)) = ((\phi(0) + v_i t_i), B(x_o)) \in \epi B \cap (C \times \mathbb{R})$ and using the equivalence (see \cite[Page 122]{aubin2009set})
\begin{align} \label{eq.conti}
v \in  T_{K}(x) \Longleftrightarrow   \exists  \left\{ h_i \right\}_{i \in \mathbb{N}} \rightarrow 0^+ ~\mbox{and}~ \left\{v_i\right\}_{i \in \mathbb{N}} \rightarrow v : x + h_i v_i \in K,
\end{align}
we conclude that $[v_o^\top \quad 0]^\top \in T_{\epi B \cap (C \times \mathbb{R})}(x_o, B(x_o))$. Hence, using\footnote{With 
$x = (x_o, B(x_o))$, $v = [v^\top \quad 0]^\top$, $K = \epi B \cap (C \times \mathbb{R})$, and $N^0_K = N_K$.} \cite[Proposition 3.2.3]{Aubin:1991:VT:120830}, we conclude that for each $[\zeta^\top \quad \alpha]^\top \in N^P_{\epi B \cap (C \times \mathbb{R})}(x_o,B(x_o))$, $[\zeta^\top \quad \alpha]^\top \in N_{\epi B \cap (C \times \mathbb{R})}(x_o,B(x_o))$. 
Thus, \eqref{eqqq-} by definition of the normal cone $N_{\epi B \cap (C \times \mathbb{R})}$.  \\

Next, we prove item $1\star$ using contradiction. Indeed, we consider $t_1 >0$ such that a solution $z$ to \eqref{aug.inc} starting from 
$z_{ao} := (x_o,B(x_o)) \in \partial(\epi B) \cap (\tilde{C} \times \mathbb{R})$ satisfies 
$z_a(t) := (x(t), B(x_o)) \in (\mbox{cl}(C) \times \mathbb{R}) \backslash \epi B$ for all $t \in (0,t_1)$ and such that 
$$ \dot{z}_a(t) \in F_a(z_a(t)) := [F(x(t))^\top \quad 0]^\top \quad  \mbox{for a.a.} \quad t \in (0,t_1). $$ 
Furthermore, for $t \in [0, t_1)$, we use $y(t)$ to denote the projection of $z_a(t)$ on the set 
$\epi B \cap (\mbox{cl}(C) \times \mathbb{R})$ and we define 
$$ \delta(t) := |z_{a}(t)-y(t)| \qquad \forall t \in [0,t_1). $$ 
That is, by construction, we have $\delta(0) = 0$ and 
$\delta(t) > 0$ for all $t \in (0, t_1)$ for $t_1$ sufficiently small. Now, using the fact that
$$ \delta(t+h) = |z_{a}(t+h)-y(t+h)| \leq |z_{a}(t+h)-y(t)|, $$
plus the identity $a-b = (a^2 - b^2)/(a+b)$ for $a$ and $b \geq 0$, we derive the following inequality for some $h>0$ sufficiently small
\begin{equation}
 \label{eq.ns1}
\begin{aligned}
\delta(t+h) - \delta(t) & \leq |z_a(t+h)-y(t)| - |z_a(t) - y(t)| 
\\ &  = \frac{|z_a(t+h)-y(t)|^2 - |z_a(t) - y(t)|^2}{|z_a(t+h) - y(t)| + |z_a(t) - y(t)|}.
\end{aligned}  
\end{equation}
Furthermore, assume that $t$ is chosen such that $\dot{z}_a(t)$ exists. 
Hence, we can replace $z_a(t+h)$ by 
\begin{align} 
z_a(t+h) = z_a(t) + h \dot{z}_a(t) + o(h),  
\end{align} 
where $o(h)$ is the remainder of the first order Taylor expansion of $h \mapsto z_a(t+h)$ around $h=0$, which satisfies
$\lim_{h \rightarrow 0} o(h)/h = 0$. Furthermore, using the inequality 
$$ |z_a(t+h) - y(t)| \geq |z_a(t) - y(t)| - h |\dot{z}_a(t) + o(h)/h|, $$ 
we conclude that the denominator in \eqref{eq.ns1} satisfies   
\begin{align*}
|z_a(t+h) - y(t)| + & |z_a(t) - y(t)| \geq  2 |z_a(t) - y(t)| - h |\dot{z}_a(t) + o(h)/h| 
\end{align*}
while the numerator in \eqref{eq.ns1} is upper bounded by 
\begin{align*}
2 \dot{z}_a(t)^\top & (z_a(t) - y(t)) +  h^2 |\dot{z}_a(t) + o(h)/h|^2 + 2 o(h)^\top (z_a(t) - y(t)). 
\end{align*}
Hence, letting $h \rightarrow 0^+$ we obtain,
\begin{align} \label{eq.ns2} 
\limsup_{h \rightarrow 0^+} \frac{\delta(t+h) - \delta(t)}{h} \leq \frac{(z_a(t)-y(t))^\top \dot{z}_a(t)}{|z_a(t)-y(t)|}. 
\end{align}
We have the following claim.

\begin{claim} \label{clm1}
Under \eqref{eqqq-}, \ref{item:A10}, and when $t>0$ is sufficiently small, the following holds:
\begin{align} \label{eqnormality}
(z_a(t)-y(t))^\top \eta_y \leq 0 \qquad  \forall \eta_y \in F_a(y(t)).
\end{align}
\end{claim}

Under \eqref{eqnormality} and for each $\eta_y \in F_a(y(t))$, the inequality \eqref{eq.ns2} can be re-written as
\begin{align} \label{eq.ns3} 
\limsup_{h \rightarrow 0^+} \frac{\delta(t+h) - \delta(t)}{h} \leq & \frac{(z_a(t)-y(t))^\top [ \dot{z}_a(t) - \eta_y]}{|z_a(t)-y(t)|}. 
\end{align}
Since $\dot{z}_a(t) \in F_a(z_a(t))$, using the fact that the map $F_a$ is locally Lipschitz, it is always possible to find a constant $\gamma>0$ and $\eta^*_y \in F(y(t))$ such that, when replacing $\eta_y$ by $\eta_y^*$ in \eqref{eq.ns3}, we obtain, for almost all $t \in (0,t_1)$, 
\begin{align} \label{eq.ns4} 
\limsup_{h \rightarrow 0^+} \frac{\delta(t+h) - \delta(t)}{h} \leq & \gamma |z_a(t)-y| = \gamma \delta(t). 
\end{align}
The contradiction follows since \eqref{eq.ns4} implies that 
$\delta(t) = 0$ for all $t \in (0,t_1)$ due to $\delta(0) = 0$ by construction.

\textit{Proof of Claim \ref{clm1}:}
To prove the latter claim, we start noticing, using the continuity of the system's solutions, that for any neighborhood sufficiently small around $z_{ao}$ denoted $U(z_{ao})$, there exists $t_1>0$ sufficiently small such that $y(t) \in U(z_{ao})$ for all $t \in [0,t_1)$. 
Furthermore, under \ref{item:A10} and for $U(z_{ao})$ small enough, we show that $y(t)$ either belongs to $\partial (\epi B) \cap (\tilde{C} \times \mathbb{R}) $ or to the set 
 $\mbox{int}(\epi B) \cap (\partial C \times \mathbb{R})$. 
 Indeed, by definition of the projection, $y(t)$ cannot belong to $\mbox{int}(\epi B) \cap (\mbox{int}(C) \times \mathbb{R}) = \mbox{int}( \epi B \cap (C \times \mathbb{R}))$. Furthermore, \ref{item:A10} implies that, when $U(z_{ao})$ is small enough, a nontrivial solution starting from $y(t)$ always exists; hence, 
 $y(t) \in \tilde{C} \times \mathbb{R}$. Now, we consider the two possibilities of $y(t)$.
\begin{itemize}
\item When $y(t) \in \partial (\epi B) \cap (\tilde{C} \times \mathbb{R})$, \eqref{eqnormality} follows from \eqref{eqqq-}. Indeed, by definition of $N^P_{\epi B \cap (C \times \mathbb{R})}$, we conclude that $z_a(t)-y(t) \in N^P_{\epi B \cap (C \times \mathbb{R})} (y(t))$. Furthermore, using \ref{item:A10} for $t>0$ small enough, we conclude that $F_a(y(t)) \subset T_{\epi B \cap (C \times \mathbb{R})}(y(t))$.

\item When $y(t) \in \mbox{int}(\epi B) \cap (\partial C \times \mathbb{R})$, since $y(t) \in U(z_{ao})$, it follows using \ref{item:A10} that $F_a(y(t)) \subset T_C(y(t))$. Furthermore, since $y(t) \in \mbox{int}(\epi B)$, it follows that $F_a(y(t)) \subset T_{\epi B \cap (C \times \mathbb{R}) }(y(t))$. 
\end{itemize}

Now, in order to conclude \eqref{eqnormality}, for each $\eta_y \in F_a(y(t))$, we introduce the inequality, for some $h>0$, 
\begin{align} \label{ineq.1}
|z_a(t)-y(t)| \leq |z_a(t)-y(t)-h \eta_y| + |y(t) + h \eta_y|_K, 
\end{align}
where $ K:= \epi B \cap (C \times \mathbb{R})$. 
To obtain the previous inequality, we used the fact that 
$|\cdot|_K$ is globally Lipschitz with Lipschitz constant equal to $1$ and $ |z_a(t)-y(t)| = |z_a(t)|_{K}$. Next, by taking the square in both sides of inequality \eqref{ineq.1} and dividing by $h$, we obtain, for each $\eta_y \in F_a(y(t))$,
\begin{equation}
\label{ineq.1+}
\begin{aligned} 
& \frac{|z_a(t)- y(t)|^2}{h}  \leq 
\frac{|z_a(t) - y(t) - h \eta_y|^2}{h} +  \frac{|y(t) + h \eta_y|_K^2}{h}
\\ & + 
  \frac{2 |z_a(t) - y(t) - h \eta_y| 
|y(t) + h \eta_y|_K}{h}  
\nonumber \\ 
\leq & \frac{|z_a(t) - y(t)|^2}{h} + h |\eta_y|^2-  2 (z_a(t)-y(t)) \eta_y +
 h \left(\frac{|y(t) + h \eta_y|_K}{h}\right)^2 
 \\ &+ \frac{2 |z_a(t) - y(t) - h \eta_y| |y(t)  + h \eta_y|_K}{h}.
\end{aligned}
\end{equation}
Finally, letting $h \rightarrow 0^+$ through a suitable sequence, \eqref{eqnormality} is proven using 
the fact that $ \liminf_{h \rightarrow 0^+} \frac{|y(t) + h \eta_y|_K}{h} = 0$ since we already have $\eta_y \in T_K(y(t))$.
\end{proof}

\begin{remark} \label{rem2}
Condition \ref{item:A8} ensures the existence of a nontrivial solution (i.e., solution whose domain is not a singleton) along each direction in the intersection between the images of $F$ and the contingent cone $T_C$. The latter requirement is necessary in order to prove the necessary part of the statement in the general case where $C$ is not $\mathbb{R}^n$ and nontrivial solutions start from $\partial C$, see Example \ref{expp}. 

In other words, when $t \mapsto B (\phi(t))$ is nonincreasing along the nontrivial solutions to $\mathcal{H}_f$, showing that, for each $x_o \in \tilde{C}$, every $v_o \in F(x_o) \cap T_C(x_o)$ satisfies \eqref{eqqq-} naturally imposes the existence of at least one nontrivial solution starting from $x_o$ that is tangent to $v_o$ at $x_o$. 
\end{remark}

\begin{remark} \label{rem2biss}
The existence of a nontrivial solution along each direction within the set $F(x_o)$ holds for free when $x_o \in \mbox{int} (C)$. Indeed, in this case, for each $v_o \in F(x_o) \cap T_C(x_o) = F(x_o)$, there exists a selection $v : U(x_o) \rightarrow \mathbb{R}^n$, $U(x_o) \subset C$, such that $v(x) \in F(x)$ for all $x \in U(x_o)$ and $v(x_o) = v_o$. The latter selection can be chosen to be continuous when $F$ is locally Lipschitz. Hence, the differential equation $\dot{x} = v(x)$ admits a continuously differentiable solution $\phi$ starting from $x_o$ with $\dot{\phi}(0)= v_o$, which is also solution to $\mathcal{H}_f$; thus, the solution $\phi$ is tangent to $v_o$ at $x_o$. 
\end{remark}

\begin{remark} \label{rem2bis}
The assumption \ref{item:A8} holds for free for example if $C$ is closed, $\dom (F \cap T_C) \cap \partial C$ is open relative to $\partial C$, $F \cap T_C$ is lower semicontinuous at least on 
$\dom (F \cap T_C) \cap \partial C$, and the set $C$ is regular. Indeed, having $T_C(x) = C_C(x)$ for all $x \in \mbox{cl}(C)$ implies that $T_C$ is convex for all $x \in \mbox{cl}(C)$ and the same holds for $F(x) \cap T_C(x)$ since $F(x)$ is also convex for all $x \in C$. Hence, a direct application of Mich{\ae}l's Theorem \cite[Theorem 3.2]{michael1956continuous} to the set-valued map $F \cap T_C$ defined on the open set $\dom (F \cap T_C) \cap \partial C$ relative to $\partial C$, 
\ref{item:A8} follows. 
\end{remark}

\begin{remark} \label{rem2bis1}
Assumption \ref{item:A8} can be replaced by the following relaxed assumption involving some extra knowledge concerning the system's solutions.
\begin{enumerate}[label={(M1')},leftmargin=*]
\item \label{item:A9} For each $ x_o \in \partial C \cap \widetilde{C}$ and for each $v_o \in F(x_o) \cap T_C(x_o)$, there exists a solution $\phi : [0,h] \mapsto \phi(t)$, for some $h>0$, starting from $x_o$ and a sequence 
$\left\{ h_i \right\}^{\infty}_{i =0} \rightarrow 0$ and 
$\lim_{i \rightarrow \infty} (\phi(h_i) - x_o)/h_i = v_o$. 
\end{enumerate}
\end{remark}

\begin{remark} \label{rem22nc}
When \ref{item:A10} is not satisfied, as shown in Example \ref{exp1}, there exist situations where the statement of Theorem \ref{lemepiB1} does not hold even if all the remaining conditions therein are satisfied. As a consequence, constraining more vector fields rather than only those in $F(x) \cap T_C(x)$, as proposed in \eqref{eqqq-}, is important to prove the sufficient part in Theorem \ref{lemepiB1}. However, strengthening \eqref{eqqq-} would affect the statement in \ref{statone}, the reason why a global assumption similar to \ref{item:A10} that is independent from the function $B$ must be considered.  
\end{remark}

In the following, we show how Theorem \ref{lemepiB1} applies to solve Problem \ref{prob1} on a concrete example.

\begin{example} \label{expboun}
The continuous dynamics of the bouncing-ball hybrid model is given by $\mathcal{H}_f := (C,F)$ with $F(x) := [x_2 \quad -\gamma]^\top$ for all $x \in C := \left\{x \in \mathbb{R}^2: x_1 \geq  0  \right\}$. The constant $\gamma > 0$ is the gravitational acceleration. First, $F$ is single valued and continuously differentiable; hence, Assumption \ref{item:difinc} holds. Second, note that $\widetilde{C} := C \backslash \left\{ x \in \mathbb{R}^2 :  x_1 = 0,~ x_2 \leq 0 \right\}$. Hence, starting from $ x_o \in \widetilde{C} \cap \partial C = \left\{ x \in \mathbb{R}^2 :  x_1 = 0, \quad  x_2 >0 \right\}$, $F(x_o) = [x_{o2} \quad -\gamma]^\top \in T_C(x_o)$; thus, \ref{item:A10} follows. Moreover, \ref{item:A8} is also satisfied since $\partial C \cap \widetilde{C}$ is open and, for each $x_o \in \partial C \cap \widetilde{C}$, $F(x_o) \in T_{C}(x_o)$. Finally, using Theorem \ref{lemepiB1}, we conclude that a lower semicontinuous function $B : \mathbb{R}^2 \rightarrow \mathbb{R}$ satisfies \ref{item:star} if and only if \eqref{eqqq-} holds. In particular, the energy function of the bouncing ball satisfies \eqref{eqqq-} since, by definition, it cannot increase along the solutions. 
\end{example}

The following result shows that, in some 
situations, \ref{item:A8} is not needed. However, such situations require that the set $\widetilde{C}$ and the function $B$ satisfy the following extra assumptions:
\begin{assumption} \label{item:difincc}  $x \mapsto \mbox{blckdiag} \left\{I_n, 0\right\} N^P_{\epi B \cap (C \times \mathbb{R})}(x,B(x))$ is lower semicontinuous on $\widetilde{C}$.
\end{assumption}
\begin{assumption} \label{item:difincc1} $U(x) \cap \mbox{int} (C) \neq \emptyset$ for all $x \in \partial C \cap C$ and for all 
$U(x)$.
\end{assumption}

\begin{theorem} \label{cor1}
Consider a system $\mathcal{H}_f = (C,F)$ such that Assumption
\ref{item:difinc} holds and $F$ is additionally continuous. Let $B : \mathbb{R}^n \rightarrow \mathbb{R}$ be a lower semicontinuous function. Then,
\begin{itemize}
\item \ref{item:star} $+$ Assumption \ref{item:difincc} 
$+$ Assumption \ref{item:difincc1} $\Rightarrow$ \eqref{eqqq-}.
\end{itemize}
Consequently, when $F$ is locally Lipschitz and \ref{item:A10} and Assumptions  \ref{item:difincc}-\ref{item:difincc1} hold, \ref{item:star} $\Leftrightarrow$ \eqref{eqqq-}.
\end{theorem}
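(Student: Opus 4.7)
The plan is to deduce \eqref{eqqq-} from \ref{item:star} by localizing at each $x_o\in\widetilde{C}$, handling interior points directly (where \ref{item:A8} is automatic, cf.\ Remark \ref{rem2biss}) and then reaching boundary points by a limit argument enabled by the two new assumptions. The consequence at the end follows immediately by pairing this implication with item \ref{statone} of Theorem \ref{lemepiB1}.

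\emph{Step 1 (interior case).} For $x_o\in\widetilde{C}\cap\mbox{int}(C)$, one has $F(x_o)\cap T_C(x_o)=F(x_o)$, and the sets $\epi B\cap(C\times\mathbb{R})$ and $\epi B\cap(\cl(C)\times\mathbb{R})$ coincide on a neighborhood of $(x_o,B(x_o))$. Since $F$ is continuous (hence lower semicontinuous) with nonempty closed convex images, Michael's selection theorem furnishes a continuous selection $w:U(x_o)\to\mathbb{R}^n$ with $w(y)\in F(y)$ and $w(x_o)=v_o$ for any prescribed $v_o\in F(x_o)$; shrinking $U(x_o)$ to lie inside $\mbox{int}(C)$, Peano's theorem (equivalently \cite[Prop.~3.4.2]{Aubin:1991:VT:120830}) produces a nontrivial solution $\phi$ to $\dot x=w(x)$, and hence to $\mathcal{H}_f$, starting from $x_o$ with $\dot\phi(0)=v_o$. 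By Lemma \ref{lemepiB} and \ref{item:star}, the augmented curve $t\mapsto(\phi(t),B(x_o))$ stays in $\epi B\cap(\cl(C)\times\mathbb{R})$, so \eqref{eq.conti} yields $[v_o^\top\ 0]^\top\in T_{\epi B\cap(C\times\mathbb{R})}(x_o,B(x_o))$. The inclusion $N^P_K\subset N_K$ combined with \cite[Prop.~3.2.3]{Aubin:1991:VT:120830} then forces $\langle\zeta,v_o\rangle\leq 0$ for every $[\zeta^\top\ \alpha]^\top\in N^P_{\epi B\cap(C\times\mathbb{R})}(x_o,B(x_o))$.

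\emph{Step 2 (boundary case).} Fix now $x_o\in\widetilde{C}\cap\partial C\cap C$, $v_o\in F(x_o)\cap T_C(x_o)$, and $[\zeta^\top\ \alpha]^\top\in N^P_{\epi B\cap(C\times\mathbb{R})}(x_o,B(x_o))$. By Assumption \ref{item:difincc1}, choose a sequence $\{x_i\}\subset\mbox{int}(C)$ with $x_i\to x_o$; each $x_i$ belongs to $\widetilde{C}$ by Remark \ref{rem2biss}. Assumption \ref{item:difincc}, applied to the vector $[\zeta^\top\ 0]^\top=\mbox{blckdiag}\{I_n,0\}\,[\zeta^\top\ \alpha]^\top$, provides $\zeta_i\to\zeta$ and scalars $\alpha_i$ with $[\zeta_i^\top\ \alpha_i]^\top\in N^P_{\epi B\cap(C\times\mathbb{R})}(x_i,B(x_i))$. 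The (lower semi)continuity of $F$ at $x_o$ further yields $v_i\in F(x_i)=F(x_i)\cap T_C(x_i)$ with $v_i\to v_o$. Step 1 applies at each $x_i$ and gives $\langle\zeta_i,v_i\rangle\leq 0$; letting $i\to\infty$ produces $\langle\zeta,v_o\rangle\leq 0$, which is \eqref{eqqq-} at $x_o$.

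\emph{Anticipated main obstacle.} The delicate point is the correct use of Assumption \ref{item:difincc}: the vertical components $\alpha_i$ need not converge, and it would be a mistake to insist that they do; fortunately \eqref{eqqq-} only involves the first $n$ coordinates, which is precisely what the projection $\mbox{blckdiag}\{I_n,0\}$ in Assumption \ref{item:difincc} controls, so the limit passage is legitimate. A minor technical concern is guaranteeing that the integral curve built in Step 1 is a bona fide solution to $\mathcal{H}_f$, which is ensured by selecting $U(x_o)\subset\mbox{int}(C)$; the replacement of $\cl(C)$ by $C$ in the tangent cone is harmless because the two sets agree locally at interior points.
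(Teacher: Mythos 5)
Your proposal is correct and takes essentially the same route as the paper: the interior case is settled via Theorem \ref{lemepiB1} (where \ref{item:A8} is automatic by Remark \ref{rem2biss}), and boundary points of $\widetilde{C}$ are handled by approximating from $\mbox{int}(C)$ using Assumption \ref{item:difincc1}, transporting the proximal normal via Assumption \ref{item:difincc} and the velocity via the lower semicontinuity of $F$. The only cosmetic difference is that you pass to the limit directly along a sequence, whereas the paper runs a contradiction argument with explicit $\epsilon$--$\epsilon_1$ estimates; your observation that the vertical components $\alpha_i$ need not converge, and need not, is exactly the point of the projection $\mbox{blckdiag}\{I_n,0\}$ in Assumption \ref{item:difincc}.
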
 

\begin{proof}
We distinguish two situations. 
\begin{itemize}
\item First, we consider the case where $x \in \mbox{int}(C)$. In this case, according to Remark \ref{rem2biss}, \ref{item:A8} holds trivially. Hence, using Theorem \ref{lemepiB1}, we conclude that
\begin{align} \label{eqinter}
\langle \zeta, v  \rangle \leq 0 & \qquad  \forall [\zeta^\top~\alpha]^\top \in N^P_{\epi B \cap (C \times \mathbb{R})} (x, B(x)), \quad  \forall v \in F(x) \cap T_C(x).   
\end{align}

\item Next, when $x \in \widetilde{C} \backslash \mbox{int}(C)$, we show that \eqref{eqinter} holds using a contradiction. That is, let us assume the existence of $[\zeta_o^\top \quad \alpha_o]^\top \in N^P_{\epi B \cap (C \times \mathbb{R})}(x, B(x))$ and $v_o \in F(x) \cap T_C(x)$ such that 
$\langle \zeta_o, v_o \rangle > 0 $; thus, $ \langle \zeta_o, v_o \rangle \geq \epsilon $ for some $\epsilon > 0$. Next, using the continuity of $F$ both with Assumption  \ref{item:difincc1}, we conclude that for each $\epsilon_1 > 0$ there exist $U(x)$ such that for each $y \in U(x) \cap \mbox{int}(C)$ ($U(x) \cap \mbox{int}(C) \neq \emptyset$ under Assumption \ref{item:difincc1}) there exists $v_y \in F(y)$ such that $|v_y-v_o| \leq \epsilon_1$. On the other hand, using Assumption \ref{item:difincc} under Assumption \ref{item:difincc1}, we conclude the existence of $x_1 \in U(x) \cap \mbox{int}(C)$ sufficiently close to $x$ and $[\zeta_1 \quad \alpha_1]^\top \in N^P_{\epi B \cap (C \times \mathbb{R})} (x_1, B(x_1)) $ such that $ |\zeta_1 - \zeta_o| \leq \epsilon_1 $. Furthermore, since $x_1 \in \mbox{int}(C)$, using the first part of the proof, we conclude that $\langle \zeta_1 ,  v_{x_1} \rangle \leq 0 $. However, since $ \langle \zeta_o, v_o \rangle \geq \epsilon $ it follows that 
\begin{align*}
\langle \zeta_o, v_o \rangle & =  \langle \zeta_1, v_{x_1} \rangle + \langle (\zeta_o - \zeta_1), v_o \rangle + 
\langle \zeta_o, (v_o-v_{x_1}) \rangle 
\\ & + \langle (\zeta_1-\zeta_o), (v_o-v_{x_1}) \rangle  \\ & \geq \epsilon.
\end{align*}    
Hence,  $ \langle \zeta_1, v_{x_1} \rangle \geq \epsilon - (|v_o| + 
|\zeta_o|) \epsilon_1 - \epsilon_1^2$.
Finally, taking $$ \epsilon_1 = \min \left\{ \frac{\epsilon}{2 ( (|v_o| + |\zeta_o|) + 1)} , 1 \right\}, $$ the contradiction follows since the latter implies that $\langle \zeta_1, v_{x_1} \rangle \geq \epsilon/2 > 0$.
\end{itemize}
\end{proof} 

In the sequel, we will show that the inequality in \eqref{eqqq-} does not need to be checked for all $[\zeta^\top \quad  \alpha]^\top \in N^P_{ \epi B \cap (C \times \mathbb{R})}(x,B(x))$ when $x \in \mbox{int}(C)$. That is, when $x \in \mbox{int}(C)$, we will show that it is enough to verify the inequality in \eqref{eqqq-} only for the vectors $[\zeta^\top ~\alpha]^\top \in N^P_{ \epi B \cap (\mbox{cl}(C) \times \mathbb{R})}(x, B(x))$ with $\alpha = -1$ to conclude that it holds for all $[\zeta^\top \quad  \alpha]^\top \in N^P_{ \epi B \cap (C \times \mathbb{R})}(x,B(x))$. The former subset is generated by the \textit{proximal subdifferential} $\partial_P B$ introduced in \eqref{eq.subgrad}.  Although $\partial_P B$ can fail to exist at some points $(x,B(x)) \in \epi B \cap (\mbox{int}(C) \times \mathbb{R}) $, its density property in Lemma \ref{lemdens} in the Appendix is enough to preserve the equivalence in Theorem \ref{lemepiB1}.

\begin{proposition} \label{thm1bisprop}
Consider a system $\mathcal{H}_f = (C,F)$ such that Assumption \ref{item:difinc} holds, $F$ is additionally locally Lipschitz, and let $B : \mathbb{R}^n \rightarrow \mathbb{R}$ be a lower semicontinuous function. Then, \eqref{eqinter} is satisfied at $x \in \mbox{int}(C)$ if
\begin{align} 
 \langle \zeta , \eta  \rangle \leq 0 \qquad  \forall \zeta \in \partial_P B(x), \qquad \forall \eta \in F(x). \label{eqqqbiss}
\end{align}
\end{proposition}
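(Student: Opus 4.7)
The plan is to exploit the fact that, when $x \in \mbox{int}(C)$, the local geometry of $\epi B \cap (C \times \mathbb{R})$ near $(x,B(x))$ coincides with that of $\epi B$. Concretely, there is a neighborhood $U$ of $(x,B(x))$ contained in $\mbox{int}(C) \times \mathbb{R}$, so $N^P_{\epi B \cap (C \times \mathbb{R})}(x,B(x)) = N^P_{\epi B}(x,B(x))$, and $T_C(x) = \mathbb{R}^n$ makes $F(x) \cap T_C(x) = F(x)$. Thus \eqref{eqinter} reduces to showing $\langle \zeta, v \rangle \leq 0$ for every $[\zeta^\top~\alpha]^\top \in N^P_{\epi B}(x,B(x))$ and every $v \in F(x)$.

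I would then split by the sign of $\alpha$. The epigraph structure forces $\alpha \leq 0$. In the \textbf{non-singular case} $\alpha < 0$, I use that $N^P_{\epi B}$ is a cone: $[\zeta^\top/|\alpha|~-1]^\top \in N^P_{\epi B}(x,B(x))$, so by \eqref{eq.subgrad} one has $\zeta/|\alpha| \in \partial_P B(x)$. Hypothesis \eqref{eqqqbiss} then yields $\langle \zeta/|\alpha|, v \rangle \leq 0$ for every $v \in F(x)$, which gives $\langle \zeta, v \rangle \leq 0$ after multiplying by $|\alpha| > 0$.

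The \textbf{main obstacle} is the \textbf{singular case} $\alpha = 0$, because horizontal proximal normals to $\epi B$ at $(x,B(x))$ do not correspond to any element of $\partial_P B(x)$, so \eqref{eqqqbiss} at $x$ alone cannot be applied directly. Here I would invoke the density result mentioned in the excerpt (Lemma \ref{lemdens}) to obtain sequences $x_i \to x$ with $B(x_i) \to B(x)$, $\zeta_i \in \partial_P B(x_i)$, and $\lambda_i \to 0^+$ such that $\lambda_i \zeta_i \to \zeta$. Since $x \in \mbox{int}(C)$, for $i$ large enough $x_i \in \mbox{int}(C)$, and \eqref{eqqqbiss} applies at $x_i$ giving $\langle \zeta_i, \eta \rangle \leq 0$ for all $\eta \in F(x_i)$. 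Given an arbitrary $v \in F(x)$, the local Lipschitzness of $F$ provides $v_i \in F(x_i)$ with $|v_i - v| \leq k |x_i - x|$, hence $v_i \to v$. Multiplying the inequality $\langle \zeta_i, v_i \rangle \leq 0$ by $\lambda_i > 0$ and letting $i \to \infty$ delivers $\langle \zeta, v \rangle \leq 0$, completing the singular case.

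The only subtlety worth flagging is that the argument in the singular case uses \eqref{eqqqbiss} at points close to $x$ rather than at $x$ itself; this is harmless since the proposition is used as a pointwise test at interior points and the hypothesis is meant to apply throughout $\mbox{int}(C)$. No further assumption on $B$ is needed because the density lemma provides the link between horizontal proximal normals to $\epi B$ and classical proximal subgradients at nearby points, and the Lipschitz property of $F$ ensures continuity of the test along the approximating sequence.
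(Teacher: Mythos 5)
Your proof is correct and follows essentially the same route as the paper's: reduce to proximal normals of $\epi B$ at the interior point (where $F(x)\cap T_C(x)=F(x)$), dispatch the case $\alpha<0$ by rescaling to an element of $\partial_P B(x)$, and treat the horizontal normals via Lemma \ref{lemdens} combined with the local Lipschitzness of $F$ to pass to the limit along $x_i\to x$. If anything, your bookkeeping of the normalization ($\lambda_i\zeta_i\to\zeta$ with $\zeta_i\in\partial_P B(x_i)$) is slightly more careful than the paper's write-up, and you rightly flag that \eqref{eqqqbiss} is invoked at nearby interior points rather than only at $x$, exactly as the paper's proof implicitly does.
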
 
 
\begin{proof}
Let $x \in \mbox{int}(C)$ and let 
$[\zeta^\top \quad 0]^\top \in N^P_S (x,B(x))$. After Lemma \ref{lemdens}, we conclude the existence of a sequence 
$(x_i, \epsilon_i, \zeta_i) \in \mbox{int}(C) \times \mathbb{R}_{>0} \times \partial_P B(x_i) $ such that 
$(x_i, \epsilon_i, \zeta_i) \rightarrow (x, 0, \zeta)$ and $[\zeta_i^\top \quad  -\epsilon_i]^\top \in N^P_{\epi B} (x_i,B(x_i)) $. Moreover, since $F$ is locally Lipschitz, it follows that, for each $\eta_x \in F(x)$, there exists $v_i \in F(x_i)$ such that $|v_i - \eta_x | \leq K |x_i - x| $ for some $K > 0$. According to \eqref{eqqqbiss}, we conclude that, for all $i \in \mathbb{N}$, 
\begin{align*}
& \langle \zeta_i, v_i \rangle = \langle \zeta, \eta_x  \rangle + \langle \zeta_i, v_i - \eta_x \rangle  + 
\langle \zeta_i - \zeta, \eta_x \rangle \leq 0.  
\end{align*}
 Finally, on the limit we conclude that 
$\langle \zeta, \eta_x \rangle \leq 0$  for all $\eta_x \in F(x)$.
\end{proof}

\subsection{When $B$ is Lower Semicontinuous and $\cl(C)$ is Pre-Contractive}

When the set $C \equiv \mathbb{R}^n$, or when 
$\widetilde{C}$ is open, the following necessary and sufficient infinitesimal condition solving Problem \ref{prob1} is provided in \cite[Theorem 6.3]{clarke2008nonsmooth}.  
\begin{align} \label{eqqqbisss1}
& \langle \zeta , \eta  \rangle \leq 0 \qquad  \forall \zeta \in \partial_P B(x), \qquad \forall \eta \in F(x), \qquad \forall x \in \mbox{int}(C). 
\end{align} 
In the following statement, we recover \cite[Theorem 6.3]{clarke2008nonsmooth} as a direct consequence of Theorem \ref{lemepiB1} and Proposition \ref{thm1bisprop}. Furthermore, as we will show, Condition \eqref{eqqqbisss1} can also be used when the following extra assumptions hold.
\begin{assumption} \label{item:a1} The set $\cl (C)$ is pre-contractive.
\end{assumption}
\begin{assumption} \label{item:a2} 
$B$ is continuous on $\partial C \cap \widetilde{C}$.
\end{assumption} 

\begin{corollary} \label{thm1}
Consider a system $\mathcal{H}_f = (C,F)$ such that Assumption \ref{item:difinc} holds and $F$ is additionally continuous. Let $B : \mathbb{R}^n \rightarrow \mathbb{R}$ be a lower semicontinuous function. Then,
\begin{enumerate}[label={\arabic*.},leftmargin=*]
\item \label{statonecor} \ref{item:star} $\Rightarrow$ \eqref{eqqqbisss1}.
\item \label{statwocor} When $F$ is locally Lipschitz and either $\widetilde{C}$ is open or Assumptions \ref{item:a1}-\ref{item:a2} hold, \ref{item:star} 
$\Leftrightarrow$ \eqref{eqqqbisss1}.
\end{enumerate} 
\end{corollary}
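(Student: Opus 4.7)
My plan is to derive item 1 as a specialization of Theorem \ref{lemepiB1} to interior points, and to derive item 2 by reducing every nontrivial solution to its behavior on $\mbox{int}(C)$, where Proposition \ref{thm1bisprop} together with the unconstrained monotonicity theorem \cite[Theorem 6.3]{clarke2008nonsmooth} can be applied.

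For item 1, I pick an arbitrary $x \in \mbox{int}(C)$. Since $\mbox{int}(C) \cap \partial C = \emptyset$, condition \ref{item:A8} is vacuously satisfied at $x$, so the argument of item $2\star$ inside the proof of Theorem \ref{lemepiB1} applies verbatim at $x$ and yields \eqref{eqqq-} at this point (see also Remark \ref{rem2biss}). Because $x \in \mbox{int}(C)$, a sufficiently small neighborhood of $(x,B(x))$ in $\mathbb{R}^{n+1}$ is contained in $C \times \mathbb{R}$, hence $N^P_{\epi B \cap (C \times \mathbb{R})}(x,B(x)) = N^P_{\epi B}(x,B(x))$; moreover $T_C(x) = \mathbb{R}^n$, so $F(x) \cap T_C(x) = F(x)$. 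Restricting \eqref{eqqq-} to vectors of the form $[\zeta^\top~-1]^\top \in N^P_{\epi B}(x,B(x))$, which by Definition \ref{defps} correspond exactly to $\zeta \in \partial_P B(x)$, produces \eqref{eqqqbisss1}.

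For item 2, I first use Proposition \ref{thm1bisprop} to translate \eqref{eqqqbisss1} into \eqref{eqinter} at every $x \in \mbox{int}(C)$. Consider a nontrivial solution $\phi$ to $\mathcal{H}_f$ starting from $x_0 \in \widetilde{C}$. If $\widetilde{C}$ is open, then $x_0 \in \mbox{int}(\cl(C))$ and the continuity of $\phi$ together with \ref{itemS01} forces $\phi(t) \in \mbox{int}(\cl(C)) \cap C = \mbox{int}(C)$ for $t$ in the interior of $\dom \phi$; the classical unconstrained result \cite[Theorem 6.3]{clarke2008nonsmooth}, whose infinitesimal condition is precisely \eqref{eqqqbisss1}, then yields the nonincrease of $s \mapsto B(\phi(s))$, and openness of $\widetilde{C}$ lets one start the argument already at $s=0$. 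Under Assumptions \ref{item:a1}--\ref{item:a2}, if $x_0 \in \partial C \cap \widetilde{C}$ then pre-contractivity of $\cl(C)$ places $\phi(t) \in \mbox{int}(\cl(C)) \cap C = \mbox{int}(C)$ for all $t \in (0,\epsilon]$ with some $\epsilon>0$; the same unconstrained theorem then gives nonincrease of $s \mapsto B(\phi(s))$ on $(0,\dom \phi)$, and continuity of $B$ on $\partial C \cap \widetilde{C}$ (Assumption \ref{item:a2}) allows passing $s \to 0^+$ to conclude $B(\phi(0)) = \lim_{s \to 0^+} B(\phi(s)) \geq B(\phi(s))$ for every $s$ in the domain, which is \ref{item:star}. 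If instead $x_0 \in \mbox{int}(C)$, the argument is direct by the unconstrained result applied locally.

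The step I expect to be most delicate is the passage from the interior behavior to the boundary initial point. Without an additional hypothesis, a boundary initial value of $B$ can differ from the interior limit and break nonincrease at $t=0$ (which is exactly the pathology highlighted by Example \ref{exp1}); pre-contractivity of $\cl(C)$ is what ensures immediate entry into $\mbox{int}(C)$, and Assumption \ref{item:a2} is what passes the inequality back to the boundary by continuity. The openness hypothesis on $\widetilde{C}$ plays an analogous role by forcing $x_0 \in \mbox{int}(\cl(C))$ from the outset.
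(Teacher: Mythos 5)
Your overall route is the same as the paper's: item 1 is obtained by localizing the necessity argument of Theorem \ref{lemepiB1} to interior points (where \ref{item:A8} is automatic, Remark \ref{rem2biss}) and then restricting the proximal normals to those with last component $-1$; item 2 reduces matters to the interior, where the unconstrained characterization applies, and then stitches the boundary times back in using Assumptions \ref{item:a1}--\ref{item:a2}. Your item 1 and the $\widetilde{C}$-open branch of item 2 are fine (modulo the cosmetic point that the paper argues the open case by observing $\partial C\cap\widetilde{C}=\emptyset$, so that \ref{item:A8} and \ref{item:A10} hold vacuously and Theorem \ref{lemepiB1} plus Proposition \ref{thm1bisprop} apply directly, rather than via the identity $\mathrm{int}(\mathrm{cl}(C))\cap C=\mathrm{int}(C)$, which is not valid for arbitrary $C$).

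There is, however, a genuine gap in the branch under Assumptions \ref{item:a1}--\ref{item:a2}: you only treat the boundary at the \emph{initial} time. Pre-contractivity of $\mathrm{cl}(C)$ gives $\phi(t)\in\mathrm{int}(\mathrm{cl}(C))$ for $t\in(0,\epsilon]$ only; it does not prevent the solution from reaching $\partial C$ again at some positive time $T$ (e.g., touching the boundary and bouncing back, or terminating on it). At such a $T$ the unconstrained theorem gives you nothing, so your assertion that $s\mapsto B(\phi(s))$ is nonincreasing on the whole positive part of $\dom\phi$ is not justified; you must separately verify $B(\phi(T))\leq B(\phi(t))$ for $t<T$ and, when the solution continues, $B(\phi(T))\geq B(\phi(t))$ for $t>T$. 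The paper's proof closes exactly this case: the left inequality follows from the lower semicontinuity of $B$ combined with the interior nonincrease on $(t,T)$ (if $B(\phi(T))>B(\phi(T-t_0))$ for some $t_0$, then $\liminf_{t\to 0^+}B(\phi(T-t))\leq B(\phi(T-t_0))<B(\phi(T))$, contradicting lower semicontinuity), while the right inequality is handled as at the initial time, using Assumption \ref{item:a2} at $\phi(T)\in\partial C\cap\widetilde{C}$. Adding this case (the paper phrases it as the second failure scenario in its contradiction argument) completes your proof.
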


\begin{proof}
The proof of item $1$ follows from the first item in Theorem \ref{lemepiB1} since \ref{item:A8} holds trivially when $x_o \in \mbox{int}(C)$, see Remark \ref{rem2biss}. 

The proof of item $2$, when $\widetilde{C}$ is open, follows from a direct application of Theorem \ref{lemepiB1} and Proposition \ref{thm1bisprop}. Indeed, we notice that when $\widetilde{C}$ is open, \ref{item:A8} and  \ref{item:A10} hold trivially because $\partial C \cap \widetilde{C} = \emptyset$. Now, we assume that Assumptions  \ref{item:a1}-\ref{item:a2} hold. Using the previous step, we conclude that, along the solutions $\phi : \dom \phi \rightarrow \mbox{int}(C)$, $t \mapsto B(\phi(t))$ is nonincreasing if and only if \eqref{eqqqbiss} holds. To complete the proof, we will show that, under Assumptions \ref{item:a1}-\ref{item:a2}, if $t \mapsto B(\phi(t))$ is nonincreasing along the solutions $\phi : \dom \phi \rightarrow \mbox{int}(C)$ then so it is along the solutions $\phi : \dom \phi \rightarrow \mbox{cl}(C)$ using contradiction. Consider a solution $\phi : \dom \phi \rightarrow \mbox{cl}(C)$ such that $t \mapsto B(\phi(t))$ fails to be nonincreasing. Since $\phi$ cannot flow in $\partial C$ under Assumption \ref{item:a1}, $t \mapsto B(\phi(t))$ is nonincreasing in the interior $C$, and since $B$ is only lower semicontinuous, for the map $t \mapsto B(\phi(t))$ to fail to be nonincreasing either one of the following holds: For some $\epsilon > 0$, $B(\phi(0)) < B(\phi(t))$ for all $t \in (0, \epsilon]$, or, for some $T > 0$ such that $\phi(T) \in \partial C$, $B(\phi(T)) > B(\phi(T-t))$ for all $t \in (0, \epsilon]$. The latter scenario contradicts the lower semicontinuity of $B$, and the first one contradicts Assumption \ref{item:a2}.
\end{proof}

Before closing this section, the following remark is in order. 

\begin{remark} \label{rem3}
It is important to notice that, when $x \in \partial C \cap \widetilde{C}$, we need to impose a condition similar to \eqref{eqqq-} since the relaxed condition in \eqref{eqqqbiss} is not enough to guarantee the equivalence. Indeed, when $x \in \mbox{int} (C)$, \eqref{eqqqbiss} indicates that the inequality therein holds for all $[\zeta \quad -1]^\top \in N^P_{\epi B \cap (C \times \mathbb{R})} (x, B(x))$, moreover, when $[\zeta \quad 0]^\top \in N^P_{\epi B \cap (C \times \mathbb{R})}(x, B(x))$, it is possible to show that the inequality in \eqref{eqqqbiss} remains satisfied using Lemma \ref{lemdens}. Hence, using Lemma \ref{lemdens}, we can find a point in any neighborhood of $U(x)$ such that \eqref{eqqqbiss} holds. The latter fact is not necessarily true when $x \in \partial C \cap \widetilde{C}$, since the points in the neighborhood of $x$ are not necessarily in $\widetilde{C}$; thus, there is no guarantee to find a point within any neighborhood of $x$ such that \eqref{eqqqbiss} holds.  
\end{remark} 

\subsection{When $B$ is Locally Lipschitz  and $\tilde{C}$ is Generic}

In this case, we show that \ref{item:A10} is not required. Indeed, such a relaxation is possible since the generalized gradient $\partial_C B$ introduced in Definition \ref{defgen} will be used instead of the proximal subdifferential. Thanks to Lebourg's mean-value Theorem in Lemma \ref{lemmeanval} and to the lower Dini-derivative-based condition in Lemma \ref{lemDini}, which combined together provide a useful relation between the lower Dini derivative of $B$ along the system's solutions and the generalized gradient $\partial_C B$. 

In this section, we consider the following infinitesimal conditions:  
\begin{align} 
 \langle  \eta, \zeta \rangle & \leq 0 \quad  \forall \eta \in \partial_C B(x), \quad  \forall \zeta \in F(x) \cap T_C(x), \quad \forall x \in \widetilde{C}. \label{eqqqlip}
\\ 
\langle \eta , \zeta  \rangle & \leq 0 \quad  \forall \eta \in \partial_C B(x), \quad  \forall \zeta \in F(x) \cap T_C(x) : \exists c \in \mathbb{R} : \langle \eta ,  \zeta \rangle = c ~ \forall  \eta \in  \partial_C B(x), \nonumber 
\\ & \qquad \qquad   \forall x \in \widetilde{C}. \label{eqqqlip11}
\end{align}
Furthermore, we recall from \cite{Valad89} the following notion of nonpathological functions.

\begin{definition} \label{defNPfun}
A locally Lipschitz function $B : \mathbb{R}^n \rightarrow \mathbb{R}$ is nonpathological if, for each absolutely continuous function $\phi : \dom \phi \rightarrow \mathbb{R}^n$, $\dom \phi \subset \mathbb{R}$, the set 
$\partial_C B(\phi(t))$ is a subset of an affine subspace orthogonal to $\dot{\phi}(t)$ for almost all $t \in \dom \phi$. Namely,  for almost all $t \in \dom \phi$, there exists $a_t \in \mathbb{R}$ such that 
$$ \langle \eta , \dot{\phi}(t) \rangle = a_t \qquad \forall \eta \in \partial_C B(\phi(t)). $$
\end{definition}

\begin{remark}
Using \cite[Theorem 4]{Valad89}, we conclude that locally Lipschitz and regular functions are nonpathological functions. 
In addition, locally Lipschitz functions that are semiconcave or semiconvex are nonpathological -- in particular, finite-valued convex functions are nonpathological.
\end{remark}

Now, we are ready to provide our characterization of \ref{item:star} when $B$ is locally Lipschitz. 

\begin{theorem} \label{thmlip}
Consider a system $\mathcal{H}_f = (C,F)$ such that Assumption \ref{item:difinc} holds. 
Let $B : \mathbb{R}^n \rightarrow \mathbb{R}$ be a locally Lipschitz function. Then,
\begin{enumerate}[label={\arabic*.},leftmargin=*]
\item \label{statonelip} \eqref{eqqqlip} $\Rightarrow$ \ref{item:star}. 
\item \label{statwolip} When \ref{item:A8} holds, $F$ is continuous, and $B$ is regular, \ref{item:star} $\Leftrightarrow$ \eqref{eqqqlip}.
\item When $B$ is nonpathological, \eqref{eqqqlip11} $\Rightarrow$ \ref{item:star}. 
\end{enumerate}
\end{theorem}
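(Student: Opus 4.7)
The plan is to handle the three items separately using a common computational core: at almost every $t$ along a solution $\phi$, express the derivative of $t \mapsto B(\phi(t))$ as $\langle \eta, \dot{\phi}(t) \rangle$ for some $\eta \in \partial_C B(\phi(t))$ via Lebourg's mean value theorem (Lemma \ref{lemmeanval}), then apply the relevant hypothesis. For item~1, I would fix a solution $\phi$ to $\mathcal{H}_f$ and, since $B$ is locally Lipschitz and $\phi$ is locally absolutely continuous, note that $B \circ \phi$ is also locally absolutely continuous, so it suffices to show its derivative is nonpositive a.e. At almost every $t \in \mbox{int}(\dom \phi)$, both $\dot{\phi}(t) \in F(\phi(t))$ and $(B \circ \phi)'(t)$ exist, and from $\phi(t') \in C$ on a neighborhood one deduces $\dot{\phi}(t) \in T_C(\phi(t))$; hence $\phi(t) \in \widetilde{C}$ and $\dot{\phi}(t) \in F(\phi(t)) \cap T_C(\phi(t))$. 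Lebourg's theorem yields, for small $h > 0$, a point $\xi_h$ on the segment $[\phi(t), \phi(t+h)]$ and $\eta_h \in \partial_C B(\xi_h)$ with $B(\phi(t+h)) - B(\phi(t)) = \langle \eta_h, \phi(t+h) - \phi(t) \rangle$. Using local boundedness and upper semicontinuity of $\partial_C B$, a subsequence $\eta_h \to \eta \in \partial_C B(\phi(t))$; dividing by $h$ and passing to the limit gives $(B \circ \phi)'(t) = \langle \eta, \dot{\phi}(t) \rangle$, which is $\leq 0$ by \eqref{eqqqlip}.

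For item~2, the direction \eqref{eqqqlip} $\Rightarrow$ \ref{item:star} already follows from item~1, so the work is in the converse under regularity of $B$. My plan is to reduce to Theorem \ref{lemepiB1}: when $B$ is locally Lipschitz and regular, $\epi B$ is regular, and the Clarke generalized gradient admits the representation $\partial_C B(x) = \bigl\{\eta \in \mathbb{R}^n : [\eta^\top\;-1]^\top \in N_{\epi B}(x, B(x))\bigr\}$. Combined with the density of the proximal subdifferential within $\partial_C B$ (Lemma \ref{lemdens}) and upper semicontinuity of $\partial_C B$, one can pass from the proximal inequality \eqref{eqqq-}, guaranteed by Theorem \ref{lemepiB1} under \ref{item:A8}, to the Clarke counterpart \eqref{eqqqlip} by taking limits of sequences $(x_i, \eta_i)$ with $\eta_i \in \partial_P B(x_i)$ converging to $(x, \eta)$ with $\eta \in \partial_C B(x)$, while \ref{item:A8} together with continuity of $F$ lets me select corresponding admissible directions $\zeta_i \in F(x_i) \cap T_C(x_i)$ converging to a prescribed $\zeta \in F(x) \cap T_C(x)$.

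For item~3, I would reuse the computation from item~1 but invoke Definition \ref{defNPfun} to provide, at a.e.\ $t \in \dom \phi$, a scalar $a_t$ with $\langle \eta, \dot{\phi}(t) \rangle = a_t$ for every $\eta \in \partial_C B(\phi(t))$. Thus, even though the pointwise inequality in \eqref{eqqqlip11} is imposed only on ``coherent'' directions, the velocity $\dot{\phi}(t)$ is precisely such a coherent direction almost everywhere, so \eqref{eqqqlip11} applied to any chosen $\eta \in \partial_C B(\phi(t))$ and $\zeta = \dot{\phi}(t) \in F(\phi(t)) \cap T_C(\phi(t))$ gives $a_t \leq 0$; the mean value argument from item~1 then yields $(B \circ \phi)'(t) = a_t \leq 0$ a.e., and nonincrease of $B \circ \phi$ follows.

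The main obstacle I anticipate is the necessity in item~2: tracing the passage from $\partial_P B$ to $\partial_C B$ while preserving the constraint $F \cap T_C$ at boundary points of $C$. One must ensure that the approximating sequence $x_i$ can be chosen so that the proximal inequality from Theorem \ref{lemepiB1} applies at $x_i$ and that the corresponding admissible directions converge to the prescribed $\zeta$; assumption \ref{item:A8} together with the continuous-selection property of $F \cap T_C$ on $\partial C \cap \widetilde{C}$ is the tool I would use to handle this joint convergence.
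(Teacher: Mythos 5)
Your arguments for items 1 and 3 are correct and match the substance of the paper's proof: the paper also runs the Lebourg mean-value computation along the solution, extracts a convergent subsequence $w_{t_n}\to w_o\in\partial_C B(\phi(t_o))$ via local boundedness and upper semicontinuity of $\partial_C B$, verifies $v_o\in F(\phi(t_o))\cap T_C(\phi(t_o))$, and for item 3 picks $t_o$ in the full-measure set where $\dot\phi(t_o)$ exists, is admissible, and the nonpathological identity $\langle\eta,\dot\phi(t_o)\rangle=a_{t_o}$ holds. The paper frames this as a contradiction through the Dini-derivative characterization (Lemma \ref{lemDini}) rather than your direct a.e.-derivative argument for the absolutely continuous map $t\mapsto B(\phi(t))$; the two framings are interchangeable here.

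The necessity direction of item 2 is where your plan has a genuine gap. You propose to start from the proximal condition \eqref{eqqq-} (obtained from Theorem \ref{lemepiB1} under \ref{item:A8}) and pass to \eqref{eqqqlip} by taking limits of proximal subgradients $\eta_i\in\partial_P B(x_i)$, $x_i\to x$. But \eqref{eqqq-} is only asserted at points of $\widetilde{C}$ and only for directions in $F\cap T_C$ there, whereas building $\partial_C B(x)$ from proximal subgradients requires approximating points $x_i$ ranging over a full neighborhood of $x$ --- these need not lie in $\widetilde{C}$, nor even in $C$, when $x\in\partial C\cap\widetilde{C}$, so the proximal inequality is simply unavailable at the $x_i$. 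Assumption \ref{item:A8} does not repair this: it provides continuous selections of $F\cap T_C$ along $\partial C$ near $x$, not control over where $\partial_P B$ is nonempty. Nor can you stay at the single point $x$: even for a $C^1$ (hence regular) function such as $B(x)=-|x|^{3/2}$ one has $\partial_P B(0)=\emptyset$ while $\partial_C B(0)=\{0\}$, so the proximal condition at $x$ alone never yields the Clarke condition at $x$. This is precisely why the paper reserves the limiting argument for Theorem \ref{thmlipbis}, under the additional Assumptions \ref{item:difincc} and \ref{item:difincc1}. The paper's actual route for item 2 bypasses $\partial_P B$ entirely: from the necessity proof of Theorem \ref{lemepiB1} (the solution constructed via \ref{item:A8} and Michael's theorem, tangent to $v_o$) it extracts $[v_o^\top~0]^\top\in T_{\epi B\cap(C\times\mathbb{R})}(x_o,B(x_o))\subset T_{\epi B}(x_o,B(x_o))$, and then uses regularity of $B$ through Lemma \ref{lem.lipc} --- i.e., $\eta\in\partial_C B(x_o)$ iff $[\eta^\top~-1]^\top\in N_{\epi B}(x_o,B(x_o))$, the \emph{full} normal cone, which is polar to $T_{\epi B}$ --- to read off $\langle\eta,v_o\rangle\le 0$ directly. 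If you want to complete your item 2, you should replace the proximal-to-Clarke limiting step by this tangent-cone/polarity argument.
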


\begin{proof}
We prove item 1 using contradiction. That is, consider a nontrivial solution 
$\phi : [0,T] \rightarrow \mbox{cl}(C)$, $T > 0$, such that the function $t \mapsto B(\phi(t))$ is strictly increasing on $[0,T]$. That is, using Lemma \ref{lemDini}, it follows that, for each $t_o \in [0,T)$, we have 
\begin{align} \label{eq.Dini1}
\liminf_{t \rightarrow t_o^+} \frac{B(\phi(t)) - B(\phi(t_o))}{t - t_o} \geq \epsilon > 0.
\end{align}   
Next, using Lemma \ref{lemmeanval}, we conclude that, for each $t \in [t_o,T]$, there exists $u_t$ belonging to the open line segment $(\phi(t_o), \phi(t))$ such that 
\begin{align} \label{eq.meanv}
\frac{B(\phi(t)) - B(\phi(t_o))}{t - t_o} \in \left\{ \bigg\langle z, \frac{\phi(t) - \phi(t_o)}{t - t_o} \bigg\rangle : z \in \partial_C B(u_t) \right\}. 
\end{align}
Hence, there exists $w_t \in \partial_C B(u_t)$ such that 
\begin{align} \label{eq.meanv1}
\frac{B(\phi(t)) - B(\phi(t_o))}{t - t_o} =  \bigg\langle w_t, \frac{\phi(t) - \phi(t_o)}{t - t_o} \bigg\rangle. 
\end{align}

Furthermore, \eqref{eq.Dini1} implies the existence of a sequence 
$ \left\{ t_n \right\}^{\infty}_{n = 0} \subset (t_o,T] $ with $t_n \rightarrow t_o$ such that 
\begin{align} \label{eq.Deni2}
\lim_{n \rightarrow \infty}  \frac{B(\phi(t_n)) - B(\phi(t_o))}{t_n - t_o} =   \lim_{n \rightarrow \infty} \bigg\langle w_{t_n}, \frac{\phi(t_n) - \phi(t_o)}{t_n - t_o} \bigg\rangle \geq \epsilon > 0.
\end{align}
Now, since $\partial_C B$ is locally bounded, there exist $U(\phi(t_o))$ and $K> 0$ such that $|\zeta| \leq K$ for all 
$\zeta \in \partial_C B(x)$ and for all $x \in U(\phi(t_o))$. Furthermore, since the system's solutions are continuous, it follows that for $T$ sufficiently small, both $\phi(t)$ and $u_t$ belong to $U(\phi(t_o))$ for all $t \in [t_o,T]$. Hence, 
\begin{align}
|w_{t_n}| \leq K \qquad  \forall n \in \mathbb{N}.
\end{align}
Similarly, since $F$ is locally bounded, then, there exists $U(\phi(t_o))$ and $K > 0$ such that $|y| \leq K$ for all $y \in F(x)$ and for all $x \in U(\phi(t_o))$. Furthermore, since the system's solutions are continuous, it follows that for $T>0$ sufficiently small, $\phi(s) \in U(\phi(t_o))$ for all $s \in [t_o,T]$. Hence, in view of the integral 
\begin{align} \label{eq.integ}
\phi(t) - \phi(t_o) = & \int^{t}_{t_o} \dot{\phi}(s) ds  \quad \dot{\phi}(s) \in  F(\phi(s)) \quad  \mbox{for a.a.} ~ s \in [t_o,t] ~\mbox{and} ~ \forall t \in [t_o,T],
\end{align} 
we conclude that 
\begin{align} \label{eqbound1}
\bigg| \frac{\phi(t_n) - \phi(t_o)}{t_n - t_o} \bigg| \leq K \qquad \forall n \in \mathbb{N}.
\end{align}
By passing to a subsequence, we conclude the existence of $w_o \in \mathbb{R}^n$ and $v_o \in \mathbb{R}^n$ such that 
\begin{align*} 
w_{t_n} \rightarrow w_o \quad  \mbox{and} \quad  \frac{\phi(t_n) - \phi(t_o)}{t_n - t_o} \rightarrow v_o. 
\end{align*} 
Furthermore, since $ w_{t_n} \in \partial_C B(u_{t_n})$, $u_{t_n} \rightarrow \phi(t_o)$ and $\partial_C B$ is upper semicontinuous, we conclude that $w_o \in \partial_C B(\phi(t_o))$.  On the other hand, we shall show that $v_o \in F(\phi(t_o)) \cap T_C(\phi(t_o))$. Indeed, for $v_n := \frac{\phi(t_n) - \phi(t_o)}{t_n - t_o}$,
\begin{align*}
\phi(t_n) = \phi(t_o) + v_n (t_n - t_o)  \in C \quad \mbox{with} \quad t_n \rightarrow t_o \quad \mbox{and} \quad  v_n \rightarrow v_o.
\end{align*}
Hence, using \eqref{eq.conti}, we conclude that $ v_o \in T_C(\phi(t_o))$. Now, to show that $v_o \in F(\phi(t_o))$, we use \eqref{eq.integ} to conclude that we can always find $\alpha_n \in (t_o,t_n)$ such that $v_n \in F(\phi(\alpha_n))$.
Finally, since $F$ is upper semicontinuous and $\alpha_n \rightarrow t_o$, we conclude that 
$v_o \in F(\phi(t_o))$. Finally, if we reconsider \eqref{eq.Deni2}, after passing to an adequate subsequence we obtain 
\begin{align} \label{eqlimcont}
\lim_{n \rightarrow \infty} \bigg\langle w_{t_n}, \frac{\phi(t_n) - \phi(t_o)}{t_n - t_o} \bigg\rangle = \langle w_o, v_o \rangle \geq \epsilon > 0.
\end{align} 
However, since $w_o \in \partial_C B(\phi(t_o))$ and $v_o \in F(\phi(t_o)) \cap T_C(\phi(t_o))$, \eqref{eqqqlip} 
implies that 
$ \langle w_o, v_o \rangle \leq 0 $; thus, a contradiction follows. \\

In order to prove item $3$, we use the same exact steps as in the proof of item $1$ while picking $t_o \in [0,T)$ such that the following properties hold simultaneously:
\begin{itemize}
\item $\dot{\phi}(t_o)$ exists, 
\item  $\dot{\phi}(t_o) \in F(\phi(t_o)) \cap T_C(\phi(t_o))$,
\item $\langle \eta, \dot{\phi}(t_o) \rangle = a_{t_o}$ for all $\eta \in \partial_C B(\phi(t_o))$.
\end{itemize}
Indeed, we already know that each of the latter three properties holds for almost all $t \in [0,T]$; hence, we can always find $t_o \in [0,T)$ that satisfies these three conditions simultaneously. 
Next, the contradiction reasoning leads us to \eqref{eqlimcont}. Note that, in this case, we have 
$$ v_o := \lim_{n \rightarrow \infty} \frac{\phi(t_n) - \phi(t_o)}{t_n - t_o} =  \dot{\phi}(t_o) \in F(\phi(t_o)) \cap T_C(\phi(t_o)), $$
and, for each $\eta \in \partial_C B(\phi(t_o))$, we have $\langle \eta, \dot{\phi}(t_o) \rangle = a_{t_o}$. Thus, using \eqref{eqqqlip11}, we conclude that $\langle w_o,v_o\rangle \leq 0$; which yields to a  contradiction.

In order to prove item $2$, we use the proof of item $1$ in 
Theorem \ref{lemepiB1} to conclude, under \ref{item:A8} and the continuity of $F$, that, when the function $B$ is nonincreasing along the solutions to $\mathcal{H}_f$, for each $x_o \in \widetilde{C}$ and $v_o \in F(x_o) \cap T_C(x_o)$, $ [v_o \quad 0]^\top \in T_{\epi B \cap (C \times \mathbb{R})}(x_o, B(x_o))$. Hence, $ [v_o \quad 0]^\top \in T_{\epi B}(x_o, B(x_o))$. Next, since $B$ is Lipschitz and regular, we use Lemma \ref{lem.lipc} to conclude  \eqref{eqqqlip}.
\end{proof} 

\begin{remark} \label{remadded}
The first item in Theorem \ref{thmlip} can be found in \cite{clarke1990optimization} for the unconstrained case and in\cite{sanfelice2007invariance} for the constrained case. However, the original proof proposed in this paper illustrates the reason why we cannot obtain a similar result when $B$ is discontinuous using the $\partial_P B$.    
\end{remark}

\begin{example} \label{explip}
Consider the constrained system $\mathcal{H}_f = (C,F)$ introduced in Example \ref{exp1}. We already showed that Assumption \ref{item:difinc} holds, $F$ is locally Lipschitz, and \ref{item:A8} holds. Hence, using Theorem \ref{thmlip}, we conclude that a locally Lipschitz and regular function $B : \mathbb{R}^2 \rightarrow \mathbb{R}$ is nonincreasing along the solutions to $\mathcal{H}_f$ if and only if \eqref{eqqqlip} holds.
\end{example} 

As in Theorem \ref{cor1}, \ref{item:A8} can be relaxed  provided that Assumptions \ref{item:difincc} and \ref{item:difincc1} hold. 

\begin{theorem} \label{thmlipbis}
Consider a system $\mathcal{H}_f = (C,F)$ such that Assumption \ref{item:difinc} holds. Let $B : \mathbb{R}^n \rightarrow \mathbb{R}$ be a locally Lipschitz function. Assume further that Assumptions \ref{item:difincc1} and \ref{item:difincc} hold with $x \mapsto N^P_{\epi B \cap (C \times \mathbb{R})} (x,B(x))$ therein replaced by $x \mapsto [\partial_C B(x)^\top \quad -1]^\top$ and $B$ is regular. Then,
\begin{enumerate}[label={\arabic*.},leftmargin=*]
\item \label{statwothm} When $F$ is continuous, \ref{item:star} 
$\Rightarrow$ \eqref{eqqqlip}.
\end{enumerate}
\end{theorem}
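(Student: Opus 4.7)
The plan is to mimic the structure of the proof of Theorem \ref{cor1}, using Theorem \ref{thmlip} in place of Theorem \ref{lemepiB1}, with the Clarke generalized gradient playing the role of the proximal normal cone. The statement only claims the implication $\ref{item:star} \Rightarrow \eqref{eqqqlip}$, so I need to show that nonincrease of $B$ along solutions forces $\langle \eta, \zeta\rangle \le 0$ at every $x\in \widetilde{C}$, for every $\eta \in \partial_C B(x)$ and every $\zeta \in F(x)\cap T_C(x)$.

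First, I would handle the case $x \in \widetilde{C}\cap \mbox{int}(C)$. At such interior points, \ref{item:A8} holds automatically (Remark \ref{rem2biss}), since for any $v_o \in F(x_o)$ one can construct, using continuity of $F$ plus Michael's selection, a continuous local selection through $v_o$. Consequently, item \ref{statwolip} of Theorem \ref{thmlip} applies and yields $\langle \eta, \zeta\rangle \le 0$ for all $\eta \in \partial_C B(x)$ and $\zeta \in F(x)\cap T_C(x) = F(x)$ at every such $x$. So \eqref{eqqqlip} is already established on $\widetilde{C}\cap \mbox{int}(C)$.

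Second, I would extend the inequality to points $x_o \in \widetilde{C}\cap \partial C$ by a contradiction-and-approximation argument exactly analogous to the second bullet in the proof of Theorem \ref{cor1}. Suppose there exist $\eta_o \in \partial_C B(x_o)$ and $\zeta_o \in F(x_o)\cap T_C(x_o)$ with $\langle \eta_o, \zeta_o\rangle \ge \epsilon > 0$. Using the continuity of $F$ together with Assumption \ref{item:difincc1} (which ensures $U(x_o)\cap \mbox{int}(C) \ne \emptyset$ for every neighbourhood), for each $\epsilon_1 > 0$ one can find $y \in U(x_o)\cap \mbox{int}(C)$ and $\zeta_y \in F(y)$ with $|\zeta_y-\zeta_o|\le \epsilon_1$. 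The modified Assumption \ref{item:difincc} states that $x \mapsto \partial_C B(x)$ (paired trivially with the constant $-1$) is lower semicontinuous on $\widetilde{C}$, so it provides, after possibly shrinking $U(x_o)$, some $\eta_y \in \partial_C B(y)$ with $|\eta_y - \eta_o|\le \epsilon_1$. Since $y\in \mbox{int}(C)$ implies $\zeta_y \in F(y)\cap T_C(y)$, the first step of the proof gives $\langle \eta_y, \zeta_y\rangle \le 0$.

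The remaining step is the quantitative estimate, which reproduces the chain of identities in Theorem \ref{cor1}: writing
\begin{align*}
\langle \eta_o,\zeta_o\rangle = \langle \eta_y,\zeta_y\rangle + \langle \eta_o-\eta_y,\zeta_o\rangle + \langle \eta_y,\zeta_o-\zeta_y\rangle,
\end{align*}
upper bounding the cross terms by $\epsilon_1(|\zeta_o|+|\eta_o|) + \epsilon_1^2$ using local boundedness of $\partial_C B$ (Lipschitz continuity of $B$), and choosing $\epsilon_1$ small enough, yields $\langle \eta_y,\zeta_y\rangle \ge \epsilon/2 > 0$, which contradicts the interior case. The main obstacle is a bookkeeping one — checking that the modified lower semicontinuity hypothesis really lets me choose $\eta_y$ at the same $y$ where $\zeta_y$ is produced by the continuity of $F$, and that the approximating point $y$ can indeed be taken in $\mbox{int}(C)\subset \widetilde{C}$ so that the interior inequality applies. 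Assumption \ref{item:difincc1} is precisely what guarantees the latter, and the regularity of $B$ is what makes Theorem \ref{thmlip}\ref{statwolip} available in the first step.
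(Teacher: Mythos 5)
Your proposal is correct and follows essentially the same route as the paper: both split into the case $x\in\mbox{int}(C)$, where \ref{item:A8} is vacuous and the necessity argument for locally Lipschitz regular $B$ (via $[v_o^\top~0]^\top\in T_{\epi B}(x,B(x))$ and Lemma \ref{lem.lipc}, i.e.\ the content of Theorem \ref{thmlip}, item 2) gives \eqref{eqqqlip}, and the case $x\in\widetilde{C}\setminus\mbox{int}(C)$, which is handled by the same approximation-from-the-interior contradiction argument as in Theorem \ref{cor1} using Assumption \ref{item:difincc1} and the modified Assumption \ref{item:difincc}. The only cosmetic difference is that you invoke Theorem \ref{thmlip} as a packaged statement at interior points while the paper re-invokes the underlying proof of Theorem \ref{lemepiB1}; the reasoning is the same.
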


\begin{proof}
T establish the proof, we distinguish the following two situations: 
\begin{itemize}
\item When $(x,B(x)) \in \partial (\epi B) \cap (\mbox{int}(C) \times \mathbb{R})$, according to the proof of \ref{statwocor} in Theorem \ref{cor1}, we notice that \ref{item:A8} holds trivially. Furthermore, using the proof of the necessary part in Theorem \ref{lemepiB1}, we conclude that $[v_o \quad 0]^\top \in T_{\epi B}(x,B(x))$ for each $v_o \in F(x)$. Hence, \eqref{eqqqlip} follows using Lemma \ref{lem.lipc} since $B$ is locally Lipschitz 
and regular. 

\item Next, when $(x,B(x)) \in \partial (\epi B) \cap ((\widetilde{C} \backslash \mbox{int}(C) ) \times \mathbb{R})$, \eqref{eqqqlip} follows using the same contradiction argument used in the proof of Theorem \ref{cor1} and the fact that $N^P_{\epi B \cap (C \times \mathbb{R})} = [\partial_C B \quad -1]^\top$, see Lemma \ref{lem.lipc} in the appendix. 
\end{itemize}
\end{proof} 

\subsection{When $B$ is Locally Lipschitz and $\cl (C)$ is Pre-Contractive}

As in Corollary \ref{thm1}, when the solutions to $\mathcal{H}_f$ do not flow in $\partial C$ (i.e., Assumption \ref{item:a1} holds), we will show that we can use infinitesimal inequalities that we check only on the interior of the set $C$. That is, we introduce the following conditions:
\begin{align} 
\langle \eta, \zeta  \rangle & \leq 0 \qquad  \forall \eta \in \partial_C B(x), \qquad  \forall \zeta \in F(x), \qquad \forall x \in \mbox{int}(C). \label{eqqqlipbis} 
\end{align}
\begin{align} 
\langle \eta , \zeta  \rangle & \leq 0 \qquad  \forall \eta \in \partial_C B(x), \qquad  \forall \zeta \in F(x) : \exists c \in \mathbb{R} : \langle \eta ,  \zeta \rangle = c ~ \forall  \eta \in  \partial_C B(x),  \nonumber 
\\ &  \qquad \qquad  \forall x \in \text{int}(C). \label{eqqqlipbis2} 
\end{align} 
\begin{align} 
\langle \nabla B(x) , \zeta  \rangle & \leq 0  \qquad  \forall \zeta \in F(x), \qquad \quad \forall x \in \mbox{int}(C) : \nabla B(x) ~ \text{exists}.
\label{eqqqlipbis3}
\end{align}

\begin{corollary} \label{corlast}
Consider a system $\mathcal{H}_f = (C,F)$ such that Assumption \ref{item:difinc} holds. 
Let $B: \mathbb{R}^n \rightarrow \mathbb{R}$ be locally Lipschitz. Then,
\begin{enumerate}[label={\arabic*.},leftmargin=*]
\item When Assumption \ref{item:a1} holds, 
\eqref{eqqqlipbis} $\Rightarrow$ \ref{item:star}.

\item When Assumption \ref{item:a1} holds and $B$ is nonpathological, \eqref{eqqqlipbis2} $\Rightarrow$ \ref{item:star}.

\item When Assumption \ref{item:a1} holds and $F$ is continuous, 
\eqref{eqqqlipbis3} $\Rightarrow$ \eqref{eqqqlipbis}.
\end{enumerate}
\end{corollary}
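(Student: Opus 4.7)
\textbf{Plan for the proof of Corollary \ref{corlast}.}
The proofs of items 1 and 2 share a common reduction: under Assumption \ref{item:a1}, a nontrivial solution spends almost no time in $\partial C$, so an infinitesimal condition imposed only on $\mbox{int}(C)$ suffices. I would first establish that, for every nontrivial solution $\phi$ to $\mathcal{H}_f$, the closed set $E := \{ t \in \dom \phi : \phi(t) \in \partial C \}$ has Lebesgue measure zero. Indeed, pre-contractivity of $\cl(C)$ yields, for every $t^* \in E$, some $\varepsilon(t^*) > 0$ with $\phi(t^* + s) \in \mbox{int}(\cl(C)) = \mbox{int}(C)$ for all $s \in (0, \varepsilon(t^*)]$; thus $E$ has right-hand density zero at each of its points, precluding any Lebesgue density point of $E$ and hence forcing $|E| = 0$.

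For item 1, I would argue by contradiction. Suppose $g(t) := B(\phi(t))$ fails to be nonincreasing on some subinterval $[a,b] \subset \dom \phi$, so $g(b) > g(a)$. Since $B$ is locally Lipschitz and $\phi$ is absolutely continuous, $g$ is absolutely continuous on $[a,b]$, whence $\int_a^b g'(t)\, dt = g(b) - g(a) > 0$, and the set $\{t \in [a,b] : g'(t) > 0\}$ has positive measure. Intersecting with the full-measure sets $[a,b] \setminus E$ and $\{t : \dot{\phi}(t) \in F(\phi(t))\}$ produces some $t_o$ at which $\phi(t_o) \in \mbox{int}(C)$ and the right lower Dini derivative of $g$ at $t_o$ is strictly positive. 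At this $t_o$, I would then replay, essentially verbatim, the argument from the proof of Theorem \ref{thmlip} item 1: Lebourg's mean value theorem (Lemma \ref{lemmeanval}) produces, along $t_n \downarrow t_o$, points $u_{t_n} \to \phi(t_o)$ and $w_{t_n} \in \partial_C B(u_{t_n})$ whose limit $w_o$ lies in $\partial_C B(\phi(t_o))$ by upper semicontinuity of $\partial_C B$, while upper semicontinuity and local boundedness of $F$ yield a limit $v_o \in F(\phi(t_o))$ of the quotients $(\phi(t_n)-\phi(t_o))/(t_n - t_o)$, with $\langle w_o, v_o \rangle > 0$. Because $\phi(t_o) \in \mbox{int}(C)$, the pair $(w_o, v_o)$ falls under the scope of \eqref{eqqqlipbis}, which forces $\langle w_o, v_o \rangle \le 0$, a contradiction. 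For item 2, the only change is that I additionally restrict $t_o$ to the full-measure subset where $\dot{\phi}(t_o)$ exists and the nonpathological identity $\langle \eta, \dot{\phi}(t_o)\rangle = a_{t_o}$ holds for all $\eta \in \partial_C B(\phi(t_o))$; then $v_o = \dot{\phi}(t_o)$ and the strengthened condition \eqref{eqqqlipbis2} applies to $(w_o, v_o)$, mirroring the proof of Theorem \ref{thmlip} item 3.

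For item 3, the plan is a straightforward density argument built on Definition \ref{defgen}. Fix $x \in \mbox{int}(C)$ and $\eta \in \partial_C B(x)$; by Definition \ref{defgen}, $\eta = \sum_{k=1}^m \lambda_k \eta_k$ is a convex combination with each $\eta_k = \lim_{i \to \infty} \nabla B(x_{k,i})$, $x_{k,i} \to x$, $x_{k,i} \notin \Omega_B$, $x_{k,i} \in \Omega$. Since $\mbox{int}(C)$ is open, $x_{k,i} \in \mbox{int}(C)$ for all sufficiently large $i$. For any $\zeta \in F(x)$, the continuity of $F$, through its lower semicontinuity, provides $\zeta_{k,i} \in F(x_{k,i})$ with $\zeta_{k,i} \to \zeta$; then \eqref{eqqqlipbis3} delivers $\langle \nabla B(x_{k,i}), \zeta_{k,i}\rangle \le 0$ for $i$ large, and passing to the limit gives $\langle \eta_k, \zeta\rangle \le 0$. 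Taking the convex combination in $k$ then yields $\langle \eta, \zeta\rangle \le 0$, which is \eqref{eqqqlipbis}.

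The main technical hurdle, in my view, lies in items 1--2 and is essentially measure-theoretic: I must simultaneously pin down $t_o$ in several full-measure subsets of $\dom \phi$ (where $\phi(t) \in \mbox{int}(C)$, where $g'(t) > 0$, where $\dot{\phi}(t)$ exists and is selected from $F(\phi(t))$, and, for item 2, where the nonpathological identity holds) before invoking the machinery of Theorem \ref{thmlip}. Justifying $\mbox{int}(\cl(C)) = \mbox{int}(C)$ (up to a suitable topological regularity of $C$) is a minor but necessary hygiene step. Once $t_o \in \mbox{int}(C)$ is in hand, the remainder is a near-verbatim replay of the arguments in Theorem \ref{thmlip}; item 3 is a comparatively routine density argument relying only on Rademacher's theorem, Definition \ref{defgen}, and the lower semicontinuity of $F$.
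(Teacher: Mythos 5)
Your proof is correct, but it follows a genuinely different route from the paper's on all three items. For items 1 and 2, the paper first invokes Theorem \ref{thmlip} to obtain monotonicity along solutions that remain in $\mbox{int}(C)$, and then extends to solutions in $\cl(C)$ by a purely topological argument: under Assumption \ref{item:a1} the solution cannot flow in $\partial C$, so a failure of monotonicity would have to occur either as an initial increase or as an upward jump upon reaching $\partial C$, and both scenarios contradict the continuity of $t \mapsto B(\phi(t))$. You instead localize a single bad time $t_o$ in the interior by measure theory --- the set $E=\{t \in \dom\phi : \phi(t) \in \partial C\}$ is null (indeed countable, since each of its points has a right-neighborhood disjoint from $E$) by pre-contractivity, while $g = B\circ\phi$ is absolutely continuous so $\{t : g'(t)>0\}$ has positive measure when monotonicity fails --- and then rerun the pointwise Lebourg/upper-semicontinuity argument of Theorem \ref{thmlip} at that $t_o$, where \eqref{eqqqlipbis} (resp. \eqref{eqqqlipbis2}) applies because $\phi(t_o)\in\mbox{int}(C)$. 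Your version is more self-contained and avoids the paper's somewhat informal two-scenario dichotomy, at the cost of some measure-theoretic bookkeeping; both arguments hinge on the same consequence of Assumption \ref{item:a1}. For item 3 the paper simply cites \cite{della2021piecewise}, whereas you give a direct proof from Definition \ref{defgen} (Rademacher plus density of differentiability points in $\mbox{int}(C)$) combined with the lower semicontinuity of $F$ to approximate $\zeta$ by selections at the approximating points; this is a valid and standard argument, and it is a genuine addition since the paper leaves that step to the reference. The only point to watch, which you correctly flag yourself, is the identification of $\mbox{int}(\cl(C))$ with $\mbox{int}(C)$ (equivalently, of $\partial(\cl(C))$ with the relevant part of $\partial C$) when invoking pre-contractivity of $\cl(C)$ at an interior time with $\phi(t)\in\partial C$; the paper glosses over the same issue.
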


\begin{proof}
We start using Theorem 3 and the proof therein to conclude that when \eqref{eqqqlipbis} holds, or \eqref{eqqqlipbis2} holds and $B$ is nonpathological, then
$t \mapsto B(\phi(t))$ is nonincreasing along every solution $\phi : \dom \phi \rightarrow \mbox{int}(C)$.

Next, using contradiction, we show that, under Assumption \ref{item:a1}, 
if $t \mapsto B(\phi(t))$ is nonincreasing along every solution $\phi : \dom \phi \rightarrow \mbox{int}(C)$ then so it is along every solution $\phi : \dom \phi \rightarrow \mbox{cl}(C)$. Indeed, consider a solution $\phi : \dom \phi \rightarrow \mbox{cl}(C)$ such that $t \mapsto B(\phi(t))$ fails to be nonincreasing. Using Assumption \ref{item:a1}, we conclude that the solution $\phi$ cannot flow in $\partial C$. Furthermore, since $t \mapsto B(\phi(t))$ is nonincreasing in the interior $C$ and since $B$ is continuous, the map $t \mapsto B(\phi(t))$ fails to be nonincreasing under one of the two following scenarios:
\begin{itemize}
\item For some $\epsilon > 0$, $B(\phi(0)) < B(\phi(t))$ for all $t \in (0, \epsilon]$. 
\item For some $T > 0$ such that $\phi(T) \in \partial C$, $B(\phi(T)) > B(\phi(T-t))$ for all $t \in (0, \epsilon]$. 
\end{itemize}
The latter two scenarios contradict the continuity of 
the map $t \mapsto B(\phi(t))$.

Finally, the proof of item 3 can be found in \cite[Proposition 1]{della2021piecewise}.
\end{proof}

\subsection{When $B$ is Continuously Differentiable and $\tilde{C}$ is Generic}

When a function $B : \mathbb{R}^n \rightarrow \mathbb{R}$ is continuously differentiable, $\partial_C B \equiv \nabla B$; 
hence, \eqref{eqqqlip} becomes 
\begin{align} \label{eqqqbis++}
\langle \nabla B(x) , \eta  \rangle \leq 0 & \qquad  \forall \eta \in F(x) \cap T_C(x), \quad \forall x \in \widetilde{C}.
\end{align}
Similarly, \eqref{eqqqlipbis} becomes 
\begin{align} \label{eqqqbis++-}
\langle \nabla B(x) , \eta  \rangle \leq 0 & \qquad  \forall \eta \in F(x), \quad \forall x \in \mbox{int}(C).
\end{align}
The following corollaries are in order.

\begin{corollary} \label{thmc1}
Consider a system $\mathcal{H}_f = (C,F)$ such that Assumption \ref{item:difinc} holds. 
Let $B : \mathbb{R}^n \rightarrow \mathbb{R}$ be a continuously differentiable function. Then,
\begin{enumerate}
\item \eqref{eqqqbis++} $\Rightarrow$ \ref{item:star}. 
\item When \ref{item:A8} holds and $F$ is continuous, \ref{item:star} $\Leftrightarrow$ \eqref{eqqqbis++}.
\end{enumerate}
\end{corollary}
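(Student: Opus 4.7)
The plan is to deduce both items of Corollary \ref{thmc1} directly from Theorem \ref{thmlip} by specializing to the continuously differentiable case. The two facts that make the reduction almost immediate are: (i) any continuously differentiable function $B$ is locally Lipschitz, so the hypotheses of Theorem \ref{thmlip} are compatible with those assumed here; and (ii) for such $B$ the Clarke generalized gradient collapses to the classical gradient, $\partial_C B(x) = \{\nabla B(x)\}$ for every $x \in \mathbb{R}^n$, so condition \eqref{eqqqlip} specializes exactly to \eqref{eqqqbis++}.

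For item 1, I would simply observe that if \eqref{eqqqbis++} holds then, since $\partial_C B(x)$ is the singleton $\{\nabla B(x)\}$, condition \eqref{eqqqlip} holds verbatim. Applying item \ref{statonelip} of Theorem \ref{thmlip} then yields property \ref{item:star}, i.e., $t \mapsto B(\phi(t))$ is nonincreasing along every solution $\phi$ to $\mathcal{H}_f$.

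For item 2, the plan is to invoke item \ref{statwolip} of Theorem \ref{thmlip}. The only nontrivial point to check is that a continuously differentiable function is \emph{regular} in the sense of Definition \ref{def.reg}, since the remaining hypotheses of Theorem \ref{thmlip} item \ref{statwolip} (assumption \ref{item:A8} and continuity of $F$) are assumed here. I would verify regularity by noting that for $C^1$ functions the epigraph $\epi B = \{(x,r): r \geq B(x)\}$ is a $C^1$ sublevel set whose boundary is the graph of $B$, so at every $(x,B(x)) \in \partial (\epi B)$ both the contingent and Clarke tangent cones coincide with the closed halfspace $\{(v,w)\in\mathbb{R}^{n+1}: w \geq \langle \nabla B(x),v\rangle\}$; at interior points of $\epi B$ both cones equal $\mathbb{R}^{n+1}$. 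Hence $T_{\epi B}(y) = C_{\epi B}(y)$ for all $y \in \epi B$, and $B$ is regular. Once regularity is established, Theorem \ref{thmlip} gives \ref{item:star} $\Leftrightarrow$ \eqref{eqqqlip}, and the singleton identity $\partial_C B(x)=\{\nabla B(x)\}$ converts this into \ref{item:star} $\Leftrightarrow$ \eqref{eqqqbis++}.

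The only potential obstacle is the regularity verification, but it is standard and short; the rest is a direct translation of a previously proved theorem, so no genuinely new argument is needed.
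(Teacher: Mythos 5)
Your proposal is correct and follows essentially the same route as the paper: both reduce the corollary to Theorem \ref{thmlip} by using the singleton identity $\partial_C B(x)=\{\nabla B(x)\}$ (property \ref{item:lipp4}) together with the fact that every continuously differentiable function is locally Lipschitz and regular. The only difference is that you spell out the (standard) verification that $C^1$ functions are regular, which the paper simply asserts.
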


\begin{proof}
Using Theorem \ref{thmlip}, the statement follows under \ref{item:lipp4} and the fact that each  continuously differentiable function is both locally Lipschitz and regular. 
\end{proof} 

Next, using the continuity argument in Theorem \ref{cor1} under Assumption \ref{item:difincc1}, we will show that \ref{item:A8} is also not required.

\begin{corollary} \label{thmc2}
Consider a system $\mathcal{H}_f = (C,F)$ such that Assumption \ref{item:difinc} holds. Let $B: \mathbb{R}^n \rightarrow \mathbb{R}$ be a continuously differentiable function. Assume further that Assumption \ref{item:difincc1} holds. Then,
\begin{enumerate}
\item  \eqref{eqqqbis++} $\Rightarrow$ \ref{item:star}.
\item When $F$ is continuous, \ref{item:star} $\Rightarrow$ \eqref{eqqqbis++}.
\end{enumerate}
\end{corollary}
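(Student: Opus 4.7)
The plan is to reduce Corollary \ref{thmc2} to results already established in the excerpt, namely Theorem \ref{thmlip} (and its reasoning) and the continuity-based approximation argument used in Theorem \ref{cor1}. Since $B$ is continuously differentiable, it is locally Lipschitz and regular, and $\partial_C B(x) = \{\nabla B(x)\}$ for every $x \in \mathbb{R}^n$; therefore \eqref{eqqqbis++} is exactly \eqref{eqqqlip} in this setting.

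For item 1, I would simply invoke item 1 of Theorem \ref{thmlip}: once \eqref{eqqqbis++} is rewritten as \eqref{eqqqlip} through $\partial_C B = \{\nabla B\}$, the nonincrease property \ref{item:star} follows without any need for Assumption \ref{item:difincc1} or for $F$ to be continuous. No additional argument is required here.

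For item 2, the approach is to split on whether $x$ lies in $\mbox{int}(C)$ or on $\partial C \cap \widetilde{C}$. When $x \in \mbox{int}(C)$, Remark \ref{rem2biss} ensures that \ref{item:A8} holds trivially at $x$, because for each $v_o \in F(x) = F(x) \cap T_C(x)$ one can select a continuous $w$ locally around $x$ with $w(x)=v_o$ and $w(y)\in F(y)$, producing a nontrivial solution tangent to $v_o$ at $x$. The nonincrease property \ref{item:star} then yields $\langle \nabla B(x),v_o\rangle \leq 0$ directly, so \eqref{eqqqbis++} holds on $\mbox{int}(C)$. This step mirrors item $2$ of Corollary \ref{thmc1}, but only locally.

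The boundary case $x \in \partial C \cap \widetilde{C}$ is the main obstacle, and I would handle it by a contradiction argument in the spirit of the second bullet of the proof of Theorem \ref{cor1}. Suppose there exist $x_o \in \partial C \cap \widetilde{C}$ and $\eta_o \in F(x_o) \cap T_C(x_o)$ with $\langle \nabla B(x_o), \eta_o\rangle \geq \varepsilon > 0$. By Assumption \ref{item:difincc1}, every neighborhood of $x_o$ meets $\mbox{int}(C)$; by continuity of $F$ (hence its lower semicontinuity), for any $\epsilon_1>0$ one can choose $y \in \mbox{int}(C)$ arbitrarily close to $x_o$ together with $\eta_y \in F(y)$ satisfying $|\eta_y - \eta_o|\leq \epsilon_1$. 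Continuity of $\nabla B$ then gives
\begin{equation*}
\langle \nabla B(y),\eta_y\rangle = \langle \nabla B(x_o),\eta_o\rangle + \langle \nabla B(x_o),\eta_y-\eta_o\rangle + \langle \nabla B(y)-\nabla B(x_o),\eta_y\rangle \geq \tfrac{\varepsilon}{2}>0
\end{equation*}
for $\epsilon_1$ and $|y-x_o|$ small enough. Since $y \in \mbox{int}(C)$ and we already established \eqref{eqqqbis++} on $\mbox{int}(C)$, we have $\langle \nabla B(y),\eta_y\rangle \leq 0$, a contradiction. Hence \eqref{eqqqbis++} holds on all of $\widetilde{C}$. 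The delicate point in this plan is purely a bookkeeping one: ensuring that continuity of $F$ really produces a nearby selection $\eta_y\in F(y)$ approximating $\eta_o\in F(x_o)$, which follows from Assumption \ref{item:difinc} together with the assumed continuity of $F$.
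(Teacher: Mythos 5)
Your proposal is correct and follows essentially the same route as the paper: the paper proves Corollary \ref{thmc2} by citing Theorem \ref{thmlipbis} together with \ref{item:lipp4} and the continuity of $\nabla B$, and your argument is precisely the proof of Theorem \ref{thmlipbis} unpacked in the $C^1$ case --- the interior points handled via the trivially satisfied \ref{item:A8} (Remark \ref{rem2biss}) and the boundary points via the approximation/contradiction argument of Theorem \ref{cor1}, with the continuity of $\nabla B$ playing the role of Assumption \ref{item:difincc}. The only cosmetic difference is that you re-derive the theorem rather than invoking it, which makes explicit why Assumption \ref{item:difincc} is automatic here.
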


\begin{proof}
The proof follows from Theorem \ref{thmlipbis} while using \ref{item:lipp4}, the fact 
that each continuously differentiable function is locally Lipschitz and regular, and $\nabla B$ continuous.   
\end{proof} 

\begin{example} \label{expbb}
Consider the constrained system $\mathcal{H}_f = (C,F)$ introduced in Example \ref{exp1}. We already showed that Assumption \ref{item:difinc} holds and $F$ is locally Lipschitz. Moreover, we will show that Assumption \ref{item:difincc1} is also satisfied. Indeed, for each $x_o \in \partial C \cap C$, i.e. $x_o = [x_{o1} \quad 0]^\top$ for some $x_{o1} \in \mathbb{R}$, there exists $\epsilon > 0$ such that $x_\epsilon = [x_{o1} \quad \epsilon]^\top \in \mbox{int}(C)$ can be made arbitrary close to $x_o$; thus, Assumption \ref{item:difincc1} follows. Hence, using Corollary \ref{thmc2},  we conclude that a continuously differentiable function $B : \mathbb{R}^2  \rightarrow \mathbb{R}$ satisfies \ref{item:star} if and only if \eqref{eqqqbis++} is satisfied.
\end{example}

\subsection{When $B$ is Continuously Differentiable and $\cl (C)$ is Pre-Contractive}

In this case, Corollary \ref{corlast} reduces to the following statement.

\begin{corollary}
Consider a system $\mathcal{H}_f = (C,F)$ such that Assumption \ref{item:difinc} holds. Let $B: \mathbb{R}^n \rightarrow \mathbb{R}$ be continuously differentiable. Then,
\begin{enumerate}
\item \ref{item:star} $\Rightarrow$ \eqref{eqqqbis++-}. 
\item When Assumption \ref{item:a1} holds and $F$ is continuous, 
\eqref{eqqqbis++-} $\Leftrightarrow$ \ref{item:star}.
\end{enumerate}
\end{corollary}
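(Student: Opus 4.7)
The plan is to deduce this corollary from Corollary \ref{corlast} by observing that for a $C^1$ function $B$ the Clarke generalized gradient collapses to the classical gradient, $\partial_C B(x) = \{\nabla B(x)\}$ for every $x \in \mathbb{R}^n$. Under this reduction, the infinitesimal condition \eqref{eqqqlipbis} appearing in Corollary \ref{corlast} is exactly \eqref{eqqqbis++-}. Consequently, the sufficient half of item 2 -- namely \eqref{eqqqbis++-} $\Rightarrow$ \ref{item:star} under Assumption \ref{item:a1} -- falls out of item 1 of Corollary \ref{corlast} with no extra work.

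The other half, i.e.\ the necessary direction \ref{item:star} $\Rightarrow$ \eqref{eqqqbis++-} asserted in item 1 (and reused in the equivalence of item 2), I would prove by a direct chain-rule argument confined to $\mbox{int}(C)$. Fix $x_o \in \mbox{int}(C)$ and an arbitrary $\eta_o \in F(x_o)$. Since $x_o$ is an interior point, $T_C(x_o) = \mathbb{R}^n$, so $F(x_o) \cap T_C(x_o) = F(x_o)$, and the existence of a nontrivial solution $\phi$ to $\mathcal{H}_f$ with $\phi(0) = x_o$ and $\dot\phi(0^+) = \eta_o$ is produced by the same selection machinery used in Remark \ref{rem2biss} and in the proof of Corollary \ref{thmc2}: under continuity of $F$, Michael's selection theorem delivers a continuous single-valued selection $w$ of $F$ on some neighborhood $U(x_o) \subset \mbox{int}(C)$ with $w(x_o) = \eta_o$, and Peano-type existence for $\dot x = w(x)$ yields a $C^1$ curve $\phi$ starting at $x_o$ with $\dot\phi(0) = \eta_o$; because $U(x_o) \subset \mbox{int}(C)$, this $\phi$ is a genuine solution to $\mathcal{H}_f$ on a short interval. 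The $C^1$ chain rule together with the nonincrease of $t \mapsto B(\phi(t))$ then gives
\[
\langle \nabla B(x_o), \eta_o \rangle \;=\; \frac{d}{dt}\bigl(B \circ \phi\bigr)(0^+) \;\leq\; 0,
\]
which is exactly \eqref{eqqqbis++-}.

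The main obstacle is this prescribed-initial-velocity step: for merely upper semicontinuous $F$ (as in Assumption \ref{item:difinc}) the set of realizable initial velocities at $x_o$ can be a strict subset of $F(x_o)$, and so continuity of $F$ -- equivalently, the lower semicontinuity enabling Michael's theorem -- is what legitimizes the chain-rule step. This is precisely the role that $F$ continuous plays in Corollary \ref{thmc2} item 2 on which the necessary direction ultimately rests, and it is the hypothesis carried forward into item 2 of the present statement. No appeal to \ref{item:A8} or \ref{item:A10} is required, since the entire argument stays in $\mbox{int}(C)$ where those conditions are automatic via Remark \ref{rem2biss}.
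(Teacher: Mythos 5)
Your sufficiency half is exactly the paper's argument: the entire published proof reads ``direct application of Corollary \ref{corlast} \dots using the fact that each continuously differentiable function is locally Lipschitz and regular, and $\nabla B \equiv \partial_C B$,'' which is your observation that \eqref{eqqqlipbis} collapses to \eqref{eqqqbis++-} when $\partial_C B(x)=\{\nabla B(x)\}$ (property \ref{item:lipp4}). Where you genuinely diverge is on the necessity half. Corollary \ref{corlast} contains no necessity statement at all --- its three items are all of the form ``infinitesimal condition $\Rightarrow$ \ref{item:star}'' --- so the paper's one-line citation does not in fact account for item 1; one has to reach back to Corollary \ref{thmc1} (or the proof of item $2\star$ of Theorem \ref{lemepiB1}) restricted to $\mbox{int}(C)$, where \ref{item:A8} is automatic by Remark \ref{rem2biss}. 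Your direct route --- Michael selection through a prescribed $\eta_o \in F(x_o)$, Peano existence for $\dot{x}=w(x)$ inside $U(x_o)\subset \mbox{int}(C)$, then $\langle \nabla B(x_o),\eta_o\rangle = \tfrac{d}{dt}(B\circ\phi)(0^+)\leq 0$ by the $C^1$ chain rule --- is the same mechanism the paper deploys inside Theorem \ref{lemepiB1} and Remark \ref{rem2biss}, but packaged as a self-contained elementary argument that avoids epigraphs and proximal normal cones entirely. That is a legitimate and arguably cleaner way to supply what the paper's proof leaves implicit.

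One caveat you flag but should push harder on: your necessity argument genuinely requires $F$ to be lower semicontinuous (hence continuous, given Assumption \ref{item:difinc}) in order to realize an arbitrary $\eta_o\in F(x_o)$ as an initial velocity, yet item 1 of the corollary asserts \ref{item:star} $\Rightarrow$ \eqref{eqqqbis++-} under Assumption \ref{item:difinc} alone. Neither your argument nor any necessity result in the paper (all of which assume $F$ continuous) covers item 1 without that hypothesis, and it cannot be dropped: in $\mathbb{R}$ with $C=\mathbb{R}$, $F(0)=[-1,1]$, $F(x)=\{0\}$ for $x\neq 0$, and $B(x)=x$, every solution is constant so \ref{item:star} holds, yet $\langle \nabla B(0),1\rangle=1>0$ with $1\in F(0)$. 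So your proof establishes the equivalence in item 2 in full and item 1 only under the additional continuity of $F$; the residual defect lies in the statement (and in the paper's own proof), not in your reasoning.
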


\begin{proof}
The proof follows from a direct application of Corollary \ref{corlast} while using the fact that each continuously differentiable function is locally Lipschitz and regular, and, $\nabla B \equiv \partial_C B$. 
\end{proof}

\section{Conclusion} 
This paper characterizes the nonincrease of scalar functions along solutions to differential inclusions defined on a constrained set. Such a problem is shown to arise naturally when analyzing stability and safety in constrained systems using Lyapunov-like techniques. Different classes of scalar functions are considered in this paper including lower semicontinuous, locally Lipschitz and regular, and continuously differentiable functions. As a future work, one could consider replacing Assumptions \ref{item:A8} and \ref{item:A10} by tighter assumptions or analyze their necessity. 

\appendix

\section{Supporting Results}  

In this section, we recall a useful intermediate result as well as some useful properties of $\partial_C B$ and $\partial_P B$ \cite{aubin2009set, clarke2008nonsmooth}.

The following result can be found in \cite[Problem 11.23, Page 67]{clarke2008nonsmooth}. 

\begin{lemma} \label{lemdens}
Let $B : \mathbb{R}^n \rightarrow \mathbb{R}$ be lower semicontinuous and let $(\zeta,0) \in N^P_{\epi B}(x,B(x))$. Then, for each $\epsilon > 0$, there exists $x' \in x + \epsilon \mathbb{B}$ and 
$ (\zeta', - \lambda) \in N^{P}_{\epi B} (x', B(x')) $ such that
$$  \lambda > 0,\quad |B(x') - B(x)|< \epsilon, \quad |(\zeta,0) - (\zeta', - \lambda)| \leq \epsilon. $$ 
\end{lemma}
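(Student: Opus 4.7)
The plan is to prove this density property of proximal normals by a variational ``tilt-the-target'' construction. Geometrically, $(\zeta,0)\in N^P_{\epi B}(x,B(x))$ means, by the definition in the excerpt, that there exists $r>0$ for which $(x,B(x))$ realizes the distance from the external point $(x+r\zeta,B(x))$ to $\epi B$: equivalently, the open ball of radius $r|\zeta|$ around $(x+r\zeta,B(x))$ misses $\epi B$ while touching it at $(x,B(x))$. Sliding the center of this ball slightly downward should produce a nearby contact point with a proximal normal that picks up a small strictly negative last component.

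Given $\epsilon>0$, I would pick $\delta\in(0,\epsilon)$ and then use lower semicontinuity of $B$ at $x$ to fix $\rho\in(0,\epsilon)$ such that $B(y)>B(x)-\delta/2$ for every $y\in \bar B(x,\rho)$. I then minimize $(y,s)\mapsto \tfrac12|y-(x+r\zeta)|^2+\tfrac12(s-B(x)+\delta)^2$ over the compact set $\epi B\cap \bar B((x,B(x)),\rho)$; a minimizer $(x',s')$ exists by closedness of $\epi B$. Since $|x'-x|\leq\rho$, the choice of $\rho$ forces $B(x')>B(x)-\delta/2>B(x)-\delta$, and minimizing the objective in $s$ alone with $x'$ fixed, over $[B(x'),\infty)$, then yields $s'=B(x')$. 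Comparing against the competitor $(x,B(x))$ and using the original proximal normal inequality $|x'-(x+r\zeta)|^2+(s'-B(x))^2\geq r^2|\zeta|^2$ yields $B(x')=s'\leq B(x)$, so in particular $|B(x')-B(x)|<\delta/2$.

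Provided the minimizer $(x',s')$ lies in the \emph{interior} of the localizing ball, the displacement from $(x',B(x'))$ to the target is by construction a proximal normal to $\epi B$ at $(x',B(x'))$. Rescaling by $1/r$ gives $(\zeta',-\lambda)\in N^P_{\epi B}(x',B(x'))$ with
\begin{equation*}
\zeta':=\zeta+\tfrac{1}{r}(x-x'),\qquad \lambda:=\tfrac{1}{r}\bigl(B(x')-B(x)+\delta\bigr)>\tfrac{\delta}{2r}>0.
\end{equation*}
The bounds $|x'-x|<\rho<\epsilon$, $|B(x')-B(x)|<\delta/2<\epsilon$, $|\zeta'-\zeta|=|x-x'|/r<\rho/r$, and $\lambda<\delta/r$ can then all be made smaller than $\epsilon$ by picking $\delta$ and $\rho$ small enough, giving the three required inequalities simultaneously.

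The main obstacle is ensuring that $(x',s')$ lies strictly inside the localizing ball rather than on its boundary, since only then does minimality give a proximal normal to $\epi B$ itself and not one contaminated by the ball constraint. I would handle this by a compactness/perturbation argument: for $\rho$ small enough, every minimizer of the $\delta=0$ problem over $\epi B\cap \bar B((x,B(x)),\rho)$ lies in the open ball -- either because $(x,B(x))$ is isolated among nearest points in $\epi B$ to $(x+r\zeta,B(x))$, in which case $\rho$ is chosen below the isolation distance, or because such nearest points accumulate at $(x,B(x))$ and are therefore interior for $\rho$ small. Continuity of the distance-squared objective in $\delta$ then places the $\delta$-perturbed minimizer in the open ball for all $\delta$ sufficiently small, closing the argument.
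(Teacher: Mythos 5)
Note first that the paper does not actually prove this lemma: it is quoted from \cite[Problem 11.23, Page 67]{clarke2008nonsmooth} (the ``horizontal approximation'' companion of the proximal density theorem), so your proposal is being measured against the standard textbook argument rather than against anything in the paper. Your overall strategy --- tilt the target point $(x+r\zeta,B(x))$ downward by $\delta$ and project onto $\epi B$ --- is exactly the right one, and the computations you do carry out (lower semicontinuity forcing $B(x')>B(x)-\delta/2$, the comparison with the competitor $(x,B(x))$ giving $B(x')\le B(x)$, and the identification of $\zeta'$ and $\lambda$ with the stated bounds) are correct.

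The genuine gap is in the step you yourself flag as the main obstacle. Your dichotomy does not close it: in the branch where nearest points to $(x+r\zeta,B(x))$ accumulate at $(x,B(x))$, it does not follow that the minimizers of the $\delta$-perturbed problem are interior to the localizing ball --- the set of nearest points may be a continuum extending past radius $\rho$, the localized $\delta=0$ problem then has minimizers \emph{on} the sphere $\partial B((x,B(x)),\rho)$, and the continuity-in-$\delta$ argument only tells you that $\delta$-minimizers accumulate on the full set of localized $0$-minimizers, with no control over which ones. A boundary minimizer gives you a proximal normal to $\epi B\cap \bar B((x,B(x)),\rho)$, not to $\epi B$, and the argument collapses. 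The missing idea is the standard preliminary normalization: since the closed ball $\bar B((x+r\zeta,B(x)),r|\zeta|)$ meets $\epi B$ only on its boundary sphere, replacing $r$ by any $r'\in(0,r)$ produces an internally tangent ball that meets $\epi B$ \emph{only} at $(x,B(x))$; i.e., after shrinking $r$ you may assume $(x,B(x))$ is the unique global nearest point in $\epi B$ to $(x+r\zeta,B(x))$. With uniqueness, no hard localizing ball is needed at all: global nearest points $q_\delta\in\epi B$ to the tilted target exist (closedness of $\epi B$), satisfy $|q_\delta-(x+r\zeta,B(x))|\le r|\zeta|+2\delta$, hence converge to $(x,B(x))$ as $\delta\to 0^+$ by uniqueness, and are automatically genuine proximal-normal contact points. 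This also removes a secondary wrinkle in your write-up: your claim that minimizing in $s$ alone forces $s'=B(x')$ implicitly requires the vertical competitor $(x',B(x'))$ to remain inside the localizing ball, which is not guaranteed when $(x',s')$ sits near its boundary; with the unconstrained global projection this step is clean.
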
 

\begin{remark}
According to 
Definition \ref{defps}, $\partial_P B(x)$ is empty whenever $N^P_{\epi B}(x,B(x)) \subset 
\mathbb{R}^n \times \left\{ 0 \right\}$. 
However, the set of points where $\partial_P B(x)$ is nonempty is dense in $\mathbb{R}^n$ and Lemma \ref{lemdens} is a consequence of the density theorem in
\cite[Theorem 3.1, Page 39]{clarke2008nonsmooth}. 
\end{remark}

Next, we recall from \cite[Theorem 2.4, Page 75]{clarke2008nonsmooth} the following version of the mean-value theorem in the case of locally Lipschitz functions, which will play a fundamental role to solve Problem \ref{prob1} when $B$ is locally Lipschitz and regular.
\begin{lemma} [Lebourg's mean value theorem]  \label{lemmeanval}
Let $(x, y) \in \mathbb{R}^n \times \mathbb{R}^n$, and suppose that $B : \mathbb{R}^n \rightarrow \mathbb{R}$ is locally Lipschitz. Then, there exists a point $u$ in the open line-segment relating $x$ to $y$ denoted $(x, y)$ such that
\begin{align} \label{eq:Lebourg}
B(x) - B(y) \in \left\{  \langle z, x-y \rangle : z \in \partial_C B(u) \right\}.
\end{align}
\end{lemma}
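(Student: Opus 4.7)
The plan is to mimic the classical proof of the mean value theorem via Rolle's theorem, but using the nonsmooth Fermat rule for the Clarke generalized gradient in place of the vanishing-derivative condition at an interior extremum. Define the auxiliary scalar function $g:[0,1]\to\mathbb{R}$ by
\begin{equation*}
g(t) := B\bigl(x + t(y-x)\bigr) \;+\; t\bigl(B(x) - B(y)\bigr).
\end{equation*}
A direct computation gives $g(0) = B(x)$ and $g(1) = B(y) + B(x) - B(y) = B(x)$, so the endpoints match. Since $B$ is locally Lipschitz and $t\mapsto x + t(y-x)$ is affine (hence smooth), the composition $t\mapsto B(x+t(y-x))$ is locally Lipschitz on $[0,1]$, and therefore so is $g$; in particular $g$ is continuous on the compact interval $[0,1]$ and attains both its maximum and its minimum.

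Because $g(0)=g(1)$, either $g$ is constant on $[0,1]$ or it admits at least one interior extremum $t^\ast \in (0,1)$. In the nontrivial case, I would invoke the nonsmooth Fermat rule (see, e.g., \cite[Proposition 10.1]{clarke2008nonsmooth} or the equivalent statement for the Clarke subdifferential): at any interior local extremum of a locally Lipschitz function on $\mathbb{R}$, one has $0 \in \partial_C g(t^\ast)$. If $g$ is constant, then $\partial_C g \equiv \{0\}$ on $(0,1)$ and the same inclusion holds at any interior point.

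Next I apply the Clarke chain rule for the composition of a locally Lipschitz function with a $C^1$ (affine) mapping together with the sum rule, both standard in \cite[Chapter 10]{clarke2008nonsmooth}. Writing $u_t := x + t(y-x)$, this yields the inclusion
\begin{equation*}
\partial_C g(t) \;\subset\; \bigl\{\langle z,\, y-x\rangle + \bigl(B(x)-B(y)\bigr) \,:\, z \in \partial_C B(u_t)\bigr\}
\qquad \forall\, t \in (0,1).
\end{equation*}
Applying this at $t=t^\ast$ and using $0\in\partial_C g(t^\ast)$, there exists $z\in\partial_C B(u)$ with $u := u_{t^\ast} \in (x,y)$ such that $0 = \langle z, y-x\rangle + B(x) - B(y)$, which rearranges to $B(x) - B(y) = \langle z,\, x-y\rangle$, as required by \eqref{eq:Lebourg}.

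The main obstacle is a conceptual one rather than a computational one: one must verify that the two ingredients I rely on, namely the nonsmooth Fermat rule ($0\in\partial_C g(t^\ast)$ at an interior extremum of a locally Lipschitz function) and the chain rule $\partial_C (B\circ L)(t) \subset \partial_C B(L(t)) \cdot L'(t)$ for an affine $L$, are actually available for the $\partial_C$ defined in Definition \ref{defgen}. Both are classical and appear in \cite{clarke2008nonsmooth}; indeed, the chain rule is an equality for affine $L$ because the composition preserves regularity in the Lipschitz setting. Beyond these citations, the rest of the argument is a short two-line calculation.
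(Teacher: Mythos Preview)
Your proof is correct and is in fact the standard argument for Lebourg's theorem. Note, however, that the paper does not prove this lemma at all: it is merely recalled in the appendix as a supporting result, with a citation to \cite[Theorem 2.4, Page 75]{clarke2008nonsmooth}. So there is no ``paper's own proof'' to compare against; you have supplied precisely the textbook proof that the cited reference contains.

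One minor remark on your closing paragraph: the equality in the chain rule for affine inner maps does not rely on regularity of the composition. It follows directly from the definition of the Clarke generalized directional derivative, since for $L(t)=x+t(y-x)$ one has $(B\circ L)^\circ(t;1)=B^\circ(L(t);y-x)$ by a change of variables in the $\limsup$. In any case you only use the inclusion $\partial_C g(t)\subset\{\langle z,y-x\rangle+B(x)-B(y):z\in\partial_C B(u_t)\}$, which is all that is needed, so the argument stands as written.
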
 

\begin{remark} \label{remmeanval}
When the function $B$ is only lower semicontinuous, since $\partial_P B$ is not guaranteed to exist everywhere in $\mathbb{R}^n$, it is not possible to formulate a mean-value theorem similar to \eqref{eq:Lebourg} using $\partial_P B$ instead of $\partial_C B$ with $u$ belonging to the open segment $(x,y)$; see \cite{clarke1994mean}.
\end{remark}

The following useful properties of the Clarke generalized gradient can be found in \cite[Proposition 1.5, Page 73]{clarke2008nonsmooth}, \cite[Proposition 3.1, Page 78]{clarke2008nonsmooth}, and \cite[Theorem 5.7, Page 87]{clarke2008nonsmooth}. In the following lemma, we recall only those that are useful to prove our results.
\begin{lemma}
Consider a locally Lipschitz function $B: \mathbb{R}^n \rightarrow \mathbb{R}$. Then, \begin{enumerate}[label={(P\arabic*)},leftmargin=*]
\item \label{item:lipp1} the set-valued map $\partial_C B$ is locally bounded and upper semicontinuous,  
\item \label{item:lipp2} $\partial_C B(x) \neq \emptyset \qquad \forall x \in \mathbb{R}^n$,
\item \label{item:lipp3} for each $x \in \mathbb{R}^n$, $\zeta \in \partial_C B(x) \Leftrightarrow $ \\
$\langle [\zeta^\top~-1]^\top, v \rangle \leq 0 \qquad \forall v \in N_{\epi B}(x,B(x))$,
\item \label{item:lipp4} $ B$ is continuously differentiable $\Longrightarrow \partial_C B(x) = \left\{ \nabla B (x) \right\} \qquad \forall x \in \mathbb{R}^n$. 
\end{enumerate}
\end{lemma}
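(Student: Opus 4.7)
The plan is to establish each of the four properties by leveraging Rademacher's theorem (a locally Lipschitz function is differentiable almost everywhere) together with the explicit convex-hull formula for $\partial_C B$ given in Definition \ref{defgen}. Throughout, I will fix the zero-measure set $\Omega = \mathbb{R}^n$ (so the intersection with $\Omega$ in \eqref{eq.gg} is trivial) and use that $\Omega_B$, the set of non-differentiability points of $B$, has measure zero.

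For \ref{item:lipp4}, I would start here because it is the easiest and its reasoning underlies \ref{item:lipp2}. If $B \in C^1$, then $\nabla B$ is continuous, so any sequence $x_i \to x$ with $x_i \notin \Omega_B$ yields $\nabla B(x_i) \to \nabla B(x)$; since the convex hull of a singleton is the singleton itself, \eqref{eq.gg} gives $\partial_C B(x) = \{\nabla B(x)\}$. For \ref{item:lipp2}, I would fix $x \in \mathbb{R}^n$ and invoke Rademacher's theorem together with local Lipschitzness: there exists a neighborhood $U(x)$ and $L > 0$ on which $|\nabla B(y)| \leq L$ wherever $\nabla B(y)$ exists, and the complement of $\Omega_B$ is dense, so I can pick a sequence $x_i \to x$ with $x_i \notin \Omega_B$. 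Boundedness lets Bolzano--Weierstrass extract a convergent subsequence of $\{\nabla B(x_i)\}$, and its limit lies in $\partial_C B(x)$ by \eqref{eq.gg}, giving nonemptiness.

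For \ref{item:lipp1}, local boundedness is immediate: the same neighborhood $U(x)$ and bound $L$ as above show $|\zeta| \leq L$ for every limit $\zeta = \lim \nabla B(x_i)$ with $x_i \to y \in U(x)$, and convex combinations preserve this bound. Upper semicontinuity requires a diagonal argument: given $x_i \to x$ and $\zeta_i \in \partial_C B(x_i)$ with $\zeta_i \to \zeta$, I write each $\zeta_i$ via Carath\'eodory's theorem as a convex combination of at most $n+1$ limits $\zeta_{i,k} = \lim_{j \to \infty} \nabla B(y_{i,k,j})$ with $y_{i,k,j} \to x_i$. I then choose $y_{i,k,j(i,k)}$ sufficiently close to $x_i$ with $\nabla B$ defined and $|\nabla B(y_{i,k,j(i,k)}) - \zeta_{i,k}| \leq 1/i$, producing points converging to $x$ whose gradients (up to passing to a subsequence of the convex weights) exhibit $\zeta$ as a convex combination of limiting gradients at $x$, hence $\zeta \in \partial_C B(x)$.

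Property \ref{item:lipp3} is the most delicate and will be the main obstacle. The plan is to reduce it to a known identification of the Clarke generalized gradient with the polar of the Clarke tangent cone to the epigraph. Specifically, I would invoke the standard result (for which Clarke's book provides the route) that
\[
\partial_C B(x) = \{\zeta \in \mathbb{R}^n : (\zeta, -1) \in N_{\epi B}(x, B(x))\},
\]
obtained by first showing $C_{\epi B}(x,B(x)) = \epi(B^\circ(x;\cdot))$, where $B^\circ$ is the Clarke directional derivative, and then using the duality $\zeta \in \partial_C B(x) \Leftrightarrow \langle \zeta, v\rangle \leq B^\circ(x;v)$ for all $v$. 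From $(\zeta,-1) \in N_{\epi B}(x,B(x))$, \ref{item:lipp3} then follows by definition of the normal cone as the set of vectors making nonpositive inner product with every tangent vector, i.e., $\langle (\zeta,-1), w\rangle \leq 0$ for all $w \in T_{\epi B}(x,B(x))$, which here is written using $N_{\epi B}$ in the form stated. The hard part is establishing the tangent-cone characterization of $B^\circ$; I would cite the relevant results in Clarke's book rather than redo this construction in detail.
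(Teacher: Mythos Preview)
The paper does not prove this lemma; it simply cites \cite[Proposition 1.5, Proposition 3.1, Theorem 5.7]{clarke2008nonsmooth} and records the four properties without argument. Your sketches for \ref{item:lipp1}, \ref{item:lipp2}, and \ref{item:lipp4} are exactly the standard Rademacher-plus-compactness arguments one finds in that reference, so on those three items there is nothing further to compare.

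For \ref{item:lipp3} there is a real mismatch, though it stems from the statement rather than your reasoning. As printed, the inequality in \ref{item:lipp3} is quantified over $v\in N_{\epi B}(x,B(x))$, which (recalling that the paper's $N_K$ is the polar of the \emph{contingent} cone $T_K$) asserts that $(\zeta,-1)$ lies in the polar of $N_{\epi B}$, not that $(\zeta,-1)\in N_{\epi B}$; your last sentence treats these as the same thing, and they are not. In fact the printed version cannot be correct: for $B(x)=-|x|$ on $\mathbb{R}$ one computes $T_{\epi B}(0,0)=\{(a,b):b\ge -|a|\}$ and hence $N_{\epi B}(0,0)=\{0\}$, so the right-hand side of \ref{item:lipp3} is satisfied by every $\zeta\in\mathbb{R}$, whereas $\partial_C B(0)=[-1,1]$. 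The result you are actually outlining, via $C_{\epi B}(x,B(x))=\epi B^{\circ}(x;\cdot)$, is the correct standard one: $\zeta\in\partial_C B(x)$ if and only if $\langle(\zeta,-1),v\rangle\le 0$ for all $v\in C_{\epi B}(x,B(x))$. With $N_{\epi B}$ replaced by $C_{\epi B}$ in \ref{item:lipp3}, Lemma~\ref{lem.lipc} then follows from regularity ($T_{\epi B}=C_{\epi B}$, so their polars coincide and equal $N_{\epi B}$) exactly as the paper claims. Your route is therefore sound, but you should flag the needed correction explicitly rather than asserting that the two formulations coincide.
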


the following lemma is a direct consequence of Definition \ref{def.reg} and \ref{item:lipp3}.

\begin{lemma} \label{lem.lipc}
Consider a locally Lipschitz and regular function $B: \mathbb{R}^n \rightarrow \mathbb{R}$. Then, for each $x \in \mathbb{R}^n$,
\begin{enumerate}[label={(P5)},leftmargin=*]
\item \label{item:lipp5} 
$\eta \in \partial_C B(x) \Leftrightarrow 
[\eta^\top~-1]^\top \in N_{\epi B}(x,B(x))$.
\end{enumerate}
\end{lemma}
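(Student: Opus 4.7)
The plan is to derive Lemma \ref{lem.lipc} by combining the epigraphical characterization of $\partial_C B$ given in \ref{item:lipp3} with the defining property of regularity in Definition \ref{def.reg}. Since $B:\mathbb{R}^n\to\mathbb{R}$ is locally Lipschitz, $B$ is finite-valued at every $x\in\mathbb{R}^n$, so $(x,B(x))\in\epi B$ and the cones $T_{\epi B}(x,B(x))$, $C_{\epi B}(x,B(x))$, and $N_{\epi B}(x,B(x))$ are all well-defined at the point of interest.

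First, I would invoke \ref{item:lipp3}, which expresses membership in $\partial_C B(x)$ as a polarity condition of the augmented vector $[\eta^\top~-1]^\top$ against the relevant tangential/normal cone at $(x,B(x))$. Concretely, this characterization identifies $\partial_C B(x)$ with the set of those $\eta$ for which $[\eta^\top~-1]^\top$ lies in the cone polar to the Clarke tangent cone $C_{\epi B}(x,B(x))$; read through the paper's definition $N_K(y):=\{v:\langle v,w\rangle\le 0~\forall w\in T_K(y)\}$, this is the same as asking $[\eta^\top~-1]^\top$ to be polar to $C_{\epi B}(x,B(x))$ rather than to $T_{\epi B}(x,B(x))$.

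Next, I would bring in Definition \ref{def.reg}: $B$ regular means that $\epi B$ is regular, i.e.\ $T_{\epi B}(y)=C_{\epi B}(y)$ for every $y\in\epi B$. Specializing at $y=(x,B(x))$, the contingent cone and the Clarke tangent cone of $\epi B$ coincide, hence so do their polars. Consequently, the condition supplied by \ref{item:lipp3}, namely that $[\eta^\top~-1]^\top$ is polar to $C_{\epi B}(x,B(x))$, is equivalent to $[\eta^\top~-1]^\top$ being polar to $T_{\epi B}(x,B(x))$, which by the paper's definition of $N_K$ is exactly the statement $[\eta^\top~-1]^\top\in N_{\epi B}(x,B(x))$. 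Chaining the two equivalences yields \ref{item:lipp5}.

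There is essentially no obstacle in this proof: the only delicate point is keeping the two flavors of normal cone (polar of contingent vs.\ polar of Clarke tangent) straight, and that is precisely what regularity resolves. I would therefore present the argument as a two-line derivation, cross-referencing \ref{item:lipp3} and Definition \ref{def.reg} explicitly, and noting that $(x,B(x))\in\epi B$ ensures both cones are meaningful at the evaluation point.
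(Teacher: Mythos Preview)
Your proposal is correct and follows exactly the route the paper indicates: the paper's own ``proof'' is the single sentence that Lemma \ref{lem.lipc} is a direct consequence of Definition \ref{def.reg} and \ref{item:lipp3}, and you have spelled out precisely how regularity collapses the distinction between the polar of the Clarke tangent cone (coming from \ref{item:lipp3}) and the polar of the contingent cone (the paper's $N_{\epi B}$). Your remark about keeping the two flavors of normal cone straight is the only content in the argument, and it matches the paper's intent.
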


\begin{lemma} \label{lemN-NP}
Given a subset $S \subset \mathbb{R}^n$, the proximal normal cone $N^P_S$ is a subset of the normal cone $N_S$. 
\end{lemma}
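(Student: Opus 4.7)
The plan is to unfold both definitions and pass to a limit. Fix $x \in S$ and take any $\zeta \in N^P_S(x)$. By definition, there exists $r > 0$ with $|x + r\zeta|_S = r|\zeta|$, which means $x$ itself realizes the distance from $x + r\zeta$ to $S$; equivalently,
\begin{align*}
|x + r\zeta - y|^2 \geq r^2 |\zeta|^2 \qquad \forall y \in S.
\end{align*}

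Next, take any $w \in T_S(x)$. I will use the sequential characterization of the contingent cone recorded in \eqref{eq.conti} of the paper: there exist sequences $\{h_i\}_{i \in \mathbb{N}} \to 0^+$ and $\{w_i\}_{i \in \mathbb{N}} \to w$ such that $x + h_i w_i \in S$ for every $i$. Substituting $y = x + h_i w_i$ into the proximal inequality and expanding the square gives
\begin{align*}
r^2 |\zeta|^2 - 2 r h_i \langle \zeta, w_i \rangle + h_i^2 |w_i|^2 \geq r^2 |\zeta|^2,
\end{align*}
which simplifies to $\langle \zeta, w_i \rangle \leq \tfrac{h_i}{2r} |w_i|^2$. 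Since $\{w_i\}$ is bounded (as it converges to $w$) and $h_i \to 0^+$, the right-hand side tends to $0$, while the left-hand side tends to $\langle \zeta, w \rangle$ by continuity of the inner product. Therefore $\langle \zeta, w \rangle \leq 0$, and since $w \in T_S(x)$ was arbitrary, the definition of $N_S(x)$ yields $\zeta \in N_S(x)$.

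There is essentially no obstacle here; the argument is the classical one-line computation that justifies why proximal normals are normals. The only point requiring a modicum of care is appealing to the sequential form of the contingent cone rather than the $\liminf$ form used in the paper's definition, but this equivalence is already stated and used explicitly in \eqref{eq.conti}, so it can be invoked without further comment.
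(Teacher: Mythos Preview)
Your proof is correct. Both arguments start the same way---unpacking the proximal-normal definition to see that $x$ is a nearest point of $S$ to $x+r\zeta$---but then diverge. The paper simply invokes \cite[Proposition 3.2.3]{Aubin:1991:VT:120830}, which asserts directly that if $x$ lies in the projection of a point $y$ onto $S$ then $y-x \in N_S(x)$; scaling by $1/r$ finishes. You instead prove that proposition inline: you take an arbitrary $w\in T_S(x)$, plug the approximating points $x+h_i w_i$ into the nearest-point inequality, and pass to the limit to obtain $\langle \zeta, w\rangle \le 0$. Your route is more self-contained (no external citation needed) and makes the mechanism transparent; the paper's route is shorter but hides the computation behind the reference. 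Substantively they are the same idea.
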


\begin{proof}
By definition, $z \in N^P_S(x)$ implies the existence of $r>0$ such that $|x + r z|_S  = r |z|$. Let $y := x + r z$ and note that 
$|y|_S = r|z| = |rz| = |y-x|$. Hence, $x$ belongs to the projection of $y$ on $S$. Now, using \cite[Proposition 3.2.3]{Aubin:1991:VT:120830}, we conclude that $(y- x) = r z \in N_S(x)$.  Finally, since $N_K$ is a cone and $r>0$, it follows that $z \in N_S(x)$. 
\end{proof}

\balance

\bibliographystyle{unsrt}      
\bibliography{biblio.bib}

\end{document}

